\documentclass[11pt, reqno]{amsart}
\usepackage[utf8]{inputenc}
\usepackage{amsmath, amssymb, amsfonts, latexsym, amsthm}
\usepackage{calligra}
\usepackage{tcolorbox}

\newtheorem{thm}{Theorem}[section]
\newtheorem{cor}[thm]{Corollary}
\newtheorem{lem}[thm]{Lemma}
\newtheorem{prop}[thm]{Proposition}
\newtheorem{rem}[thm]{Remark}
\newtheorem{term}[thm]{Terminology}

\newtheorem{defn}[thm]{Definition}
\numberwithin{equation}{section}

\newcommand{\z}{\zeta}

\newcommand{\ov}{\overline}

\newcommand{\no}{\noindent}

\newcommand{\Om}{\Omega}

\newcommand{\abs}[1]{\left\vert#1\right\vert}

\begin{document}

\title{A Canonical Positive Definite Kernel Associated with the $\xi$-Bergman Kernel}
\keywords{$\xi$-Bergman kernel, log-plurisubharmonicity, positive-definite kernel, Bessel function, Lu Qi-Keng domain, boundary behaviour, transformation formula}
\subjclass{Primary: 32A25, 32A36 ; Secondary : 30D15}
 
\author{Sahil Gehlawat, Aakanksha Jain and Amar Deep Sarkar} 

\address{SG: Department of Mathematics, Indian Institute of Technology Jodhpur, NH 65, Jodhpur, Rajasthan-342030, India}
\email{sahilg@iitj.ac.in}

\address{AJ: Department of Mathematics, Indian Institute of Science Education and Research, Pune 411008, India}
\email{aakanksha.jain@iiserpune.ac.in, mathcv.jain@gmail.com}

\address{ADS: Department of Mathematics, Indian Institute of Technology Bhubaneswar, Argul 752050, India}
\email{amar@iitbbs.ac.in}

\begin{abstract}
Let $\Om \subset \mathbb{C}^{n}$ and $\xi \in \ell^{1}$. The $\xi$-Bergman kernel $K_{\xi, \Omega}$, introduced by Bao and Guan, generalizes the classical Bergman kernel by replacing the point evaluation functional with a functional determined by sequence $\xi$. While this kernel inherits several important extremal and plurisubharmonic properties, it is intrinsically an on-diagonal object and therefore lacks the two-variable reproducing kernel structure that lies at the heart of the classical Bergman theory. The purpose of this paper is to associate a canonical Hermitian positive-definite kernel with the $\xi$-Bergman kernel and to investigate its analytic and geometric properties.

Our construction is based on the family of Riesz representatives corresponding to the $\xi$-evaluation functionals. More precisely, we introduce a Hermitian kernel obtained as the Gram kernel of these representatives and show that it is positive definite and for $z \in \Om$ satisfies 
\[
B_{\xi,\Omega}(z,z)=K_{\xi,\Omega}(z),
\]
thereby recovering the $\xi$-Bergman kernel as its diagonal restriction. As a consequence, we prove that the $\xi$-Bergman kernel is real analytic on $\Omega$. We also establish biholomorphic transformation laws, and obtain a representation of the $\xi$-Bergman kernel in terms of derivatives of the classical Bergman kernel. 

Furthermore, we obtain explicit formulas for the $\xi$-Bergman kernel on the upper half-plane $\mathbb{H}$ corresponding to several classes of sequences $\xi$, establish corresponding $\xi$-Lu Qi-Keng results, and derive precise boundary asymptotics. These examples illustrate how the choice of the differential functional influences both the zero set and the boundary growth of the associated kernel.
\end{abstract}

\maketitle

\section{Introduction}

The Bergman kernel of a domain $\Om \subset \mathbb{C}^{n}$, denoted by $K_{\Om}(\cdot, w)$, is the reproducing kernel of $A^{2}(\Om)$ corresponding to the evaluation functional $\Lambda(f) = f(w)$ for $w \in \Om$. It occupies a central position in complex analysis and complex geometry. In addition to providing the reproducing kernel of the Bergman space, it encodes a remarkable amount of analytic and geometric information about the underlying domain. It gives rise to the Bergman metric, determines biholomorphic invariants, and plays an essential role in the study of invariant metrics, holomorphic mappings, and several problems in complex geometry. Owing to these diverse applications, numerous generalizations of the classical Bergman kernel have been developed over the past decades, each arising from a different extremal problem or geometric consideration.

\no Among these generalizations is the $\xi$-Bergman kernel introduced by Bao and Guan (see \cite{Bao-Guan_1}), where the classical point evaluation functional is replaced by a differential functional
\[
L(f)(z) = (\xi \cdot f)(z) = \sum_{\alpha \in \mathbb{N}^{n}}{\xi_{\alpha}\frac{f^{(\alpha)}(z)}{\alpha!}},
\]
determined by a sequence $\xi = \{\xi_{\alpha}\}_{\alpha \in \mathbb{N}^{n}}$. The $\xi$-Bergman kernel (see Definition \ref{md1}) is defined as $K_{\xi, \Om}(z) := \text{sup}_{0 \neq f \in A^{2}(\Omega)}{\frac{\abs{(\xi \cdot f)(z)}^{2}}{\int_{\Omega}{\abs{f}}^{2}}}$, for all $z \in \Omega$. This construction naturally interpolates between point evaluation and higher-order derivative evaluations and has proven to be a useful tool in several problems related to the strong openness property of multiplier ideal sheaves. The theory was further developed in subsequent works (see \cite{Bao-Guan_2}, \cite{BGY2}, \cite{BG3} and \cite{Bao_note}). More recently, in \cite{Bao-Guan-Sun}, Bao, Guan and Sun developed an $L^p$-theory of $\xi$-Bergman kernels, extending the classical Hilbert-space setting to nonlinear extremal problems, using similar techniques as in the work of Zhang and Chen (see \cite{p-Bergman}) . Their work establishes reproducing identities, regularity properties, and several structural results for the associated $p$-Bergman kernels corresponding to a sequence $\xi$.

\no Although these developments provide a comprehensive understanding of the extremal aspects of $\xi$-Bergman kernels, they also reveal a fundamental distinction from the classical Bergman theory. The classical Bergman kernel is simultaneously an extremal object, a reproducing kernel, and the Gram kernel of the family of point evaluation functionals. Consequently, it carries the complete Hilbert-space geometry of the Bergman space. In contrast, the $\xi$-Bergman kernel is naturally defined only on the diagonal. Although the recent $L^p$-theory introduces an off-diagonal extremal kernel, it does not produce a positive-definite reproducing kernel whose diagonal coincides with the original $\xi$-Bergman kernel. Thus, unlike the classical situation, there is no canonical two-variable kernel naturally associated with the $\xi$-Bergman kernel itself.

The purpose of this article is to fill this gap. We introduce a Hermitian positive-definite kernel (see Definition \ref{D: The xi-Bergman kernel B}) $B_{\xi, \Omega}(\cdot, \cdot)$ such that for all $z \in \Om$
\begin{equation}\label{E: the diagonal restriction of xi Bergman kernel}
B_{\xi,\Omega}(z,z) = K_{\xi,\Omega}(z),
\end{equation}
that is, the $\xi$-Bergman kernel is realized as the diagonal restriction of the kernel $B_{\xi, \Omega}(\cdot, \cdot)$. This restores an essential feature of the classical Bergman theory that was previously unavailable for the $\xi$-Bergman kernel. The construction of $B_{\xi,\Omega}$ is canonical. Once the differential functional that defines the $\xi$-Bergman kernel is fixed, the associated family of Riesz representatives is uniquely determined by the Hilbert-space structure of the Bergman space. The kernel $B_{\xi,\Omega}$ therefore arises naturally as the Gram kernel of this family and does not require auxiliary choices. From this viewpoint, it may be regarded as the natural two-variable object associated with the $\xi$-Bergman kernel in much the same way that the classical Bergman kernel is associated with point evaluation.


Our first objective is to establish the fundamental analytic and geometric properties of $B_{\xi,\Omega}$. We show that it admits an explicit representation in terms of derivatives of the classical Bergman kernel, from which several basic properties follow naturally. In particular, we prove biholomorphic transformation laws for both the kernel $B_{\xi, \Omega}$ and the associated $\xi$-Bergman kernel $K_{\xi, \Omega}$. The positive-definite nature of $B_{\xi,\Omega}$ also gives rise to a natural reproducing kernel Hilbert space. Although the original Bergman space already provides the ambient Hilbert space in which the $\xi$-evaluation functionals are defined, the kernel $B_{\xi, \Omega}$ determines another holomorphic reproducing kernel Hilbert space whose reproducing kernel is precisely $B_{\xi,\Omega}$. This additional Hilbert-space structure offers a new perspective on $\xi$-Bergman kernels and further emphasizes the role of the kernel $B_{\xi, \Omega}$ as the natural two-variable object associated with the differential functional $\xi$.

\no A particularly useful feature of the representation formula for $B_{\xi,\Omega}$ is that it reduces many questions to the corresponding questions about the classical Bergman kernel. Consequently, analytic properties of the latter may often be transferred directly to the former. As an immediate application, we prove that the $\xi$-Bergman kernel is real analytic on $\Om$ (see Theorem \ref{T: Real analyticity of xi Bergman kernel K}). Previously, it was only known to be locally Lipschitz continuous function (Proposition 2.9 in \cite{Bao-Guan-Sun}).

The second part of the paper is devoted to explicit computations on the upper half-plane $\mathbb{H} \subset \mathbb{C}$. The upper half-plane serves as an ideal model because its classical Bergman kernel is known explicitly, allowing one to obtain closed formulas for the kernel $B_{\xi, \mathbb{H}}(\cdot, \cdot)$ corresponding to various choices of the sequence $\xi$. We first consider finitely supported sequences and derive explicit expressions for the associated kernels. We then study exponentially decaying sequences, in particular $\xi_n=\frac{1}{n!}$,
for which the kernel $B_{\xi, \mathbb{H}}(\cdot, \cdot)$ admits a representation involving classical special functions known as Bessel functions. These explicit formulas provide concrete examples illustrating the general theory and reveal several phenomena that are not visible from the abstract construction alone.

\no One consequence of these formulas is the possibility of studying the zero sets of the $\xi$-Bergman kernel. Motivated by the classical Lu Qi-Keng problem, we investigate whether the $\xi$-Bergman kernel vanishes on $\mathbb{H}$. For the classes of sequences considered in this article, we establish corresponding $\xi$-Lu Qi-Keng theorems by combining explicit kernel formulas with classical results on the zeros of entire functions and Bessel functions. These examples illustrate how the choice of the sequence $\xi$ influences the global analytic behaviour of the associated kernel.

\no Finally, we investigate the boundary behaviour of the $\xi$-Bergman kernel. Using the explicit formulas obtained earlier, we derive precise asymptotic expansions as one approaches the boundary of $\mathbb{H}$. These asymptotics reveal two markedly different regimes. For finitely supported sequences, the $\xi$-Bergman kernel exhibits polynomial growth comparable to that of higher-order derivatives of the classical Bergman kernel. In contrast, for exponentially decaying sequences such as $\xi_n=1/n!$, the boundary behaviour is governed by exponential growth. This dichotomy illustrates that the asymptotic behaviour of the $\xi$-Bergman kernel is highly sensitive to the analytic properties of the sequence $\xi$, thereby providing a new family of boundary phenomena in the theory of generalized Bergman kernels.

Taken together, these results show that the canonical kernel $B_{\xi, \Omega}$ introduced in this paper is considerably more than an auxiliary two-variable extension of the $\xi$-Bergman kernel. It provides a natural Hilbert-space framework in which the original $\xi$-Bergman kernel appears as the diagonal of a positive-definite Hermitian kernel, establishes a direct connection with the classical Bergman kernel through explicit representation formulas, and offers a unified approach to problems concerning analyticity, biholomorphic invariance, zero sets, and boundary asymptotics.

The present work should be viewed as complementary to the recent $L^p$-theory of Bao, Guan and Sun (\cite{Bao-Guan-Sun}). Their work develops the extremal theory of $\xi$-Bergman kernels in the general nonlinear setting, establishing existence, uniqueness, regularity and several structural properties of the associated $p$-Bergman kernels. In contrast, the present paper exploits the additional Hilbert-space structure available in the case $p=2$ to construct a canonical positive-definite kernel naturally associated with the $\xi$-Bergman kernel. While the off-diagonal kernels considered in the $L^p$-theory arise from extremal problems, our construction is obtained as the Gram kernel of the family of Riesz representatives corresponding to the $\xi$-evaluation functionals. This viewpoint produces a canonical Hermitian kernel whose diagonal is exactly the original $\xi$-Bergman kernel and leads naturally to a reproducing kernel Hilbert space, explicit representation formulas in terms of derivatives of the classical Bergman kernel, and the applications developed in the later sections.

The paper is organized as follows. In Section~2, we introduce the kernel $B_{\xi,\Omega}$ and establish its fundamental properties, including its positivity, Hermitian symmetry, real analyticity, and the reproducing kernel Hilbert space naturally associated with it. In Section~3, we relate this kernel to the $\xi$-Bergman kernel by identifying the function $R_{\xi,\Omega}$ with the off-diagonal $\xi$-Bergman kernel and proving that
\[
B_{\xi,\Omega}(z,z)=K_{\xi,\Omega}(z).
\]
We also obtain a representation of these kernels in terms of derivatives of the classical Bergman kernel and deduce the real analyticity of the $\xi$-Bergman kernel. Section~4 is devoted to explicit computations on the upper half-plane, where we derive closed formulas for several classes of sequences $\xi$ and investigate the corresponding $\xi$-Lu Qi-Keng problem. In Section~5, we study the boundary behaviour of the $\xi$-Bergman kernel on the upper half-plane and obtain precise asymptotic estimates for several classes of sequences $\xi$. Finally, in Section~6, we establish biholomorphic transformation laws for the $\xi$-Bergman kernel and the kernel $B_{\xi, \Omega}$.
\medskip

\noindent \textbf{Notation:} Due to property \ref{E: the diagonal restriction of xi Bergman kernel}, we will also call the kernel $B_{\xi, \Omega}$ the $\xi$-Bergman kernel, whenever there is no sense of confusion. To distinguish it from the kernel introduced by Bao-Guan-Sun in \cite{Bao-Guan-Sun}, we will call their kernel $K_{\xi, \Omega}(\cdot, \cdot)$ the off-diagonal $\xi$-Bergman kernel, just as they did.
Throughout the article, we shall drop the subscript $\Om$ whenever there is no confusion about the domain.

\section{The canonical positive-definite kernel $B_{\xi}$}

Let $\Omega \subset \mathbb{C}^n$ be a domain. The \textit{Bergman space} $A^2(\Om)$ is the reproducing kernel Hilbert space of square-integrable holomorphic functions on $\Om$ with respect to the inner product
\[
\langle f, g\rangle_{A^2(\Om)} :=
\int_{\Om} f(z) \overline{g(z)}\,dV(z),
\]
where $dV$ denotes the standard Lebesgue measure. 
The reproducing kernel $K_{\Omega}(z,\zeta)$ of $A^2(\Om)$ is called the \textit{Bergman kernel} of $\Omega$. Thus, we have the reproducing formula
\[
f(z) = \langle f, K_{\Om}(\cdot,z)\rangle_{A^2(\Om)},
\qquad
\text{for every }f\in A^2(\Om) \text{ and }z\in\Om.
\]
It is known that $K_{\Om}(z,\zeta)$ is holomorphic in $z$ and antiholomorphic in $\zeta$. For multi-indices $\alpha,\beta \in \mathbb{N}^n$, we set
\[
K_{\Omega}^{\alpha,\ov\beta}(z,\zeta)
:=
\frac{\partial^{|\alpha|+|\beta|}K_{\Omega}(z,\zeta)}
{\partial z^\alpha \partial \overline{\zeta}^{\beta}}.
\]
The following reproducing identity for derivatives of the Bergman kernel is standard and follows by differentiating the Bergman kernel reproducing formula; see, for instance, \cite{Bell, KrantzBergman, KrantzSCV}.

\begin{lem}[Derivative Reproducing Formula]\label{der}
For every $f\in A^2(\Omega)$, $z\in\Omega$ and $\alpha\in\mathbb N^n$,
\[
f^{(\alpha)}(z)
=
\langle
f,K_{\Omega}^{0,\overline{\alpha}}(\cdot,z)
\rangle_{A^2(\Omega)}.
\]
Consequently,
\[
K_{\Omega}^{\alpha,\overline{\beta}}(z,\zeta)
=
\left\langle
K_{\Omega}^{0,\overline{\beta}}(\cdot,\zeta),
K_{\Omega}^{0,\overline{\alpha}}(\cdot,z)
\right\rangle_{A^2(\Omega)}
\]
for all $z,\zeta\in\Omega$ and $\alpha,\beta\in\mathbb N^n$.
\end{lem}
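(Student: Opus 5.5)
The plan is to differentiate the reproducing formula $f(w)=\langle f,K_\Omega(\cdot,w)\rangle=\int_\Omega f(\zeta)\overline{K_\Omega(\zeta,w)}\,dV(\zeta)$ in the variable $w$, and to justify moving the derivative under the inner product by a Hilbert-space argument rather than by pointwise domination. Concretely, I will show that the map $w\mapsto K_\Omega(\cdot,w)\in A^2(\Omega)$ is antiholomorphic as a Hilbert-space-valued map. Its antiholomorphic derivatives are then elements of $A^2(\Omega)$, and these elements coincide pointwise with $K_\Omega^{0,\overline{\alpha}}(\cdot,w)$.

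\emph{Step 1 (weak antiholomorphy).} For each fixed $f\in A^2(\Omega)$, the pairing $w\mapsto\langle K_\Omega(\cdot,w),f\rangle=\overline{f(w)}$ is antiholomorphic. Moreover, $\|K_\Omega(\cdot,w)\|^2=K_\Omega(w,w)$ is locally bounded; this follows from the mean value estimate $|f(w)|\le C_{\mathrm{dist}(w,\partial\Omega)}\|f\|$. A weakly holomorphic map into a Hilbert space (here, weakly antiholomorphic) that is locally bounded is strongly holomorphic: this is the standard Dunford-type result, proved via the vector-valued Cauchy integral formula on polydiscs. Hence $G(w):=K_\Omega(\cdot,w)$ is a strongly antiholomorphic $A^2$-valued map, and all its derivatives $\partial^{\alpha}_{\overline w}G(w)$ exist in norm.

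\emph{Step 2 (identification).} Point evaluation at $z$ is a continuous functional on $A^2(\Omega)$, so it commutes with strong derivatives. Therefore $(\partial^\alpha_{\overline w}G(w))(z)=\partial^\alpha_{\overline w}K_\Omega(z,w)=K_\Omega^{0,\overline\alpha}(z,w)$. In particular $K_\Omega^{0,\overline\alpha}(\cdot,w)\in A^2(\Omega)$.

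\emph{Step 3 (the formula).} Since the inner product is continuous, we may differentiate $\overline{f(w)}=\langle G(w),f\rangle$ under the pairing, which gives $\overline{f^{(\alpha)}(w)}=\langle\partial^\alpha_{\overline w}G(w),f\rangle$. Taking conjugates yields $f^{(\alpha)}(z)=\langle f,K_\Omega^{0,\overline\alpha}(\cdot,z)\rangle$. For the consequence, apply this formula to $f=K_\Omega^{0,\overline\beta}(\cdot,\zeta)\in A^2(\Omega)$, which is legitimate by Step 2. Its $\alpha$-th $z$-derivative at $z$ is $\partial_z^\alpha\partial_{\overline\zeta}^\beta K_\Omega(z,\zeta)=K_\Omega^{\alpha,\overline\beta}(z,\zeta)$. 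Here we use that $K_\Omega$ is real-analytic (holomorphic in $z$ and in $\overline\zeta$), so the mixed partials commute.

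The only real obstacle is the justification in Step 1 of differentiating under the integral. I would handle it with the Cauchy estimate on a polydisc $P$ with $\overline P\subset\Omega$. This gives local uniform bounds on $\|K_\Omega(\cdot,w)\|$, and the vector-valued Cauchy formula $G(w)=\frac{1}{(2\pi i)^n}\int_{\partial_0 P}\frac{G(t)}{\prod(\overline t_j-\overline w_j)}\,d\overline t$ (a Bochner integral) can then be differentiated directly. Measurability of $t\mapsto G(t)$ is automatic, since $A^2(\Omega)$ is separable and $G$ is weakly continuous.
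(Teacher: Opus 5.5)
Your proposal is correct and takes the route the paper indicates. The paper writes out no proof; it only says the identity ``follows by differentiating the Bergman kernel reproducing formula'' and cites references. Your Dunford-type argument that $w\mapsto K_\Omega(\cdot,w)$ is strongly antiholomorphic justifies differentiating $\overline{f(w)}=\langle K_\Omega(\cdot,w),f\rangle$ under the pairing, and it also gives $K_\Omega^{0,\overline{\alpha}}(\cdot,w)\in A^2(\Omega)$, which the second identity needs. One cosmetic slip: with the standard orientation of $\partial_0 P$, the antiholomorphic Cauchy formula carries the constant $(-2\pi i)^{-n}$ rather than $(2\pi i)^{-n}$, which does not affect the argument.
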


\medskip

Consider the space
\[
\ell^1=
\left\{
\xi=(\xi_\alpha)_{\alpha\in\mathbb N^n}:
\sum_{\alpha\in\mathbb N^n}
|\xi_\alpha|\,\rho^{|\alpha|}
<\infty
\text{ for every }\rho>0
\right\}.
\]
Given $\xi\in\ell^1$,
we first introduce the kernel function $R_\xi:\Omega\times\Omega\rightarrow\mathbb C$,
from which we shall construct the positive definite-kernel $B_\xi$. In the next section, we shall see that $R_\xi$ is precisely the off-diagonal $\xi$-Bergman kernel; see Definition \ref{md2}.

\begin{defn}
Let $\Om\subset\mathbb{C}^n$ and $\xi\in \ell^1$. We define the kernel function $R_{\xi}$ by
\begin{equation}
R_\xi(z,\zeta)
=
\sum_{\alpha\in\mathbb N^n}
\frac{\ov{\xi_\alpha}}{\alpha!}
K_{\Omega}^{0,\ov\alpha}(z,\zeta).
\end{equation}
for $z,\zeta\in\Om$.
\end{defn}

\begin{prop}\label{rxi_0}
The series 
\[
\sum_{\alpha\in\mathbb N^n}
\frac{\ov{\xi_\alpha}}{\alpha!}
K_{\Omega}^{0,\ov\alpha}(z,\zeta),
\qquad
\text{for }z,\zeta\in\Om
\]
converges uniformly on all compact subsets of $\Om\times \Om$. Therefore, $R_{\xi}(z,\zeta)$ is holomorphic in $z$ and antiholomorphic in $\zeta$. In particular, $R_{\xi}(z,\zeta)$ is real-analytic on $\Om\times\Om$.
\end{prop}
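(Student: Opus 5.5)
The plan is to bound the terms $\frac{1}{\alpha!}K_{\Om}^{0,\ov\alpha}(z,\zeta)$ uniformly on a compact set $E\times F\subset\Om\times\Om$ by a geometric quantity $C\rho^{-|\alpha|}$, and then to use the hypothesis $\xi\in\ell^1$, i.e. $\sum_\alpha|\xi_\alpha|\rho^{|\alpha|}<\infty$ for every $\rho>0$.

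\textbf{Step 1: Cauchy estimates in $\zeta$.} Fix compacts $E,F\subset\Om$ and choose $r>0$ such that the closed polydiscs $\ov{\Delta(\zeta,2r)}$ lie in $\Om$ for every $\zeta\in F$. Set
\[
F_r=\bigcup_{\zeta\in F}\ov{\Delta(\zeta,r)},
\]
which is compact in $\Om$. For fixed $z$, the map $\zeta\mapsto\ov{K_\Om(z,\zeta)}=K_\Om(\zeta,z)$ is holomorphic. The Cauchy estimates on the polydisc $\Delta(\zeta,r)$ then give
\[
\frac{1}{\alpha!}\abs{K_{\Om}^{0,\ov\alpha}(z,\zeta)}\le r^{-|\alpha|}\sup_{w\in\ov{\Delta(\zeta,r)}}\abs{K_\Om(z,w)}.
\]

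\textbf{Step 2: boundedness of the Bergman kernel on compacts.} By Cauchy--Schwarz,
\[
\abs{K_\Om(z,w)}\le K_\Om(z,z)^{1/2}K_\Om(w,w)^{1/2}.
\]
The diagonal $K_\Om(z,z)$ is continuous; indeed, it is bounded on compacts by a mean value estimate $K_\Om(z,z)\le C/\mathrm{dist}(z,\partial\Om)^{2n}$. Hence
\[
M:=\sup_{E\times F_r}\abs{K_\Om}<\infty,
\]
and so $\frac{1}{\alpha!}\abs{K_\Om^{0,\ov\alpha}}\le Mr^{-|\alpha|}$ on $E\times F$. The Weierstrass M-test with $\rho=1/r$ now yields uniform convergence of the series on $E\times F$. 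Since every compact subset of $\Om\times\Om$ is contained in such a product, this proves the uniform convergence on compact subsets of $\Om\times\Om$.

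\textbf{Step 3: regularity of $R_\xi$.} Each term $K_\Om^{0,\ov\alpha}(z,\zeta)$ is holomorphic in $z$ and antiholomorphic in $\zeta$; equivalently, it is holomorphic in $(z,\ov\zeta)$ on $\Om\times\Om^*$, where $\Om^*$ denotes the conjugate domain. Uniform limits of holomorphic functions are holomorphic, so $(z,\ov\zeta)\mapsto R_\xi(z,\zeta)$ is holomorphic. Consequently $R_\xi$ is holomorphic in $z$, antiholomorphic in $\zeta$, and real-analytic on $\Om\times\Om$, since a holomorphic function composed with the real-linear map $(z,\zeta)\mapsto(z,\ov\zeta)$ is real-analytic.

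The only step needing care is the uniform Cauchy estimate. It requires choosing the radius $r$ uniformly over $F$ and confirming the local boundedness of $K_\Om$ on $E\times F_r$; both are standard.
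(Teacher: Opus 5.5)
Your proposal is correct and follows essentially the same route as the paper: Cauchy estimates in $\zeta$ on polydiscs of a radius $r$ chosen uniformly over the compact set, a uniform bound on $K_\Omega$ there, the Weierstrass $M$-test via $\sum_\alpha|\xi_\alpha|r^{-|\alpha|}<\infty$, and then holomorphy of the locally uniform limit. The only difference is cosmetic: you justify the local boundedness of $K_\Omega$ through Cauchy--Schwarz and the diagonal mean-value estimate, whereas the paper simply asserts the constant $A_K$ exists.
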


\begin{proof}
Let $K\subset \Om$ be a compact. Choose $r>0$ such that $\overline{P(p,r)}$ is compactly contained in $\Om$ for every $p\in K$, where $P(p,r)$ denotes the polydisc centered at $p$ with polyradius $r$.
There exists a constant $A_K>0$ such that 
\[
\vert K_{\Om}(z,\zeta)\vert \leq A_K
\]
for $z,\zeta$ lying in any of such polydiscs. Therefore, by the Cauchy estimates
\[
\vert K_{\Omega}^{0,\ov\alpha}(z,\zeta)\vert 
\leq 
A_K \frac{\alpha !}{r^{\vert \alpha\vert}}
\]
for all $z,\zeta \in K$ and $\alpha\in \mathbb{N}^n$. Hence
\[
\left|
\frac{\ov{\xi_\alpha}}{\alpha!}
K_\Omega^{0,\ov \alpha}(z,\zeta)
\right|
\leq
A_K\,|\xi_\alpha|\,r^{-|\alpha|}.
\]
Since $\xi\in \ell^1$, we have 
$\sum_{\alpha\in\mathbb N^n}
|\xi_\alpha|r^{-|\alpha|}
<\infty.
$
Therefore, by the Weierstrass $M$-test, the series
\[
\sum_{\alpha\in\mathbb N^n}
\frac{\overline{\xi_\alpha}}{\alpha!}
K_\Omega^{0,\ov\alpha}(z,\zeta)
\]
converges uniformly on $K\times K$. Since $K\subset\Omega$ was arbitrary, the convergence is locally uniform on $\Omega\times\Omega$. The other conclusions follow because $K_{\Om}(z,\zeta)$ is holomorphic in $z$ and antiholomorphic in $\zeta$.
\end{proof}

\begin{prop}\label{rxi}
For every $\zeta\in\Om$, we have
\[
R_{\xi}(\cdot,\zeta) \in A^2(\Om).
\]
\end{prop}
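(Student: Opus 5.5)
Fix $\zeta\in\Om$. The natural route is to show that the series defining $R_\xi(\cdot,\zeta)$ converges not just locally uniformly (Proposition \ref{rxi_0}) but in the norm of $A^2(\Om)$. Completeness of $A^2(\Om)$ then gives a limit in $A^2(\Om)$. Since norm convergence in a reproducing kernel Hilbert space implies pointwise convergence, that limit coincides with $R_\xi(\cdot,\zeta)$. So it suffices to prove absolute convergence of
\[
\sum_{\alpha\in\mathbb N^n}\frac{|\xi_\alpha|}{\alpha!}\,\big\|K_\Om^{0,\ov\alpha}(\cdot,\zeta)\big\|_{A^2(\Om)} .
\]

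First, each term lies in $A^2(\Om)$. By Lemma \ref{der}, $f\mapsto f^{(\alpha)}(\zeta)$ is represented by $K_\Om^{0,\ov\alpha}(\cdot,\zeta)$. The same lemma, with $z=\zeta$ and $\beta=\alpha$, gives
\[
\big\|K_\Om^{0,\ov\alpha}(\cdot,\zeta)\big\|^2_{A^2(\Om)}=K_\Om^{\alpha,\ov\alpha}(\zeta,\zeta).
\]

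Second, this quantity needs a Cauchy-type bound. Take $r>0$ with $\overline{P(\zeta,r)}\Subset\Om$ and let $A$ be a bound for $|K_\Om(z,w)|$ on $\overline{P(\zeta,r)}\times\overline{P(\zeta,r)}$. The function $(z,\ov w)\mapsto K_\Om(z,w)$ is holomorphic in both sets of variables, so the Cauchy estimates applied in both variables give
\[
|K_\Om^{\alpha,\ov\alpha}(\zeta,\zeta)|\le A\,\frac{(\alpha!)^2}{r^{2|\alpha|}} .
\]
An alternative is the extremal characterization $\|K^{0,\ov\alpha}(\cdot,\zeta)\|=\sup_{\|f\|\le1}|f^{(\alpha)}(\zeta)|$. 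Combined with the mean-value bound $|f|\le C_r\|f\|$ on $P(\zeta,r/2)$ and the Cauchy estimates on $P(\zeta,r/2)$, this gives directly
\[
\|K^{0,\ov\alpha}(\cdot,\zeta)\|\le C_r\,\alpha!\,(r/2)^{-|\alpha|}.
\]
This second route is cleaner, and I would use it.

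Either way, the $\alpha$-th term of the series is bounded by $C\,|\xi_\alpha|\rho^{|\alpha|}$ with $\rho=2/r$. Because $\xi\in\ell^1$ in the paper's sense, meaning $\sum|\xi_\alpha|\rho^{|\alpha|}<\infty$ for every $\rho>0$, the series converges absolutely in $A^2(\Om)$, which finishes the proof. The only real obstacle is getting a factorial-rate bound on the $A^2$ norms of the derivative kernels. Once that bound is in hand, the strong summability hypothesis on $\xi$ absorbs the geometric factor $r^{-|\alpha|}$.
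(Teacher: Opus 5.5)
Your proposal is correct and is essentially the paper's argument. The paper also rests on the bound $\|K_\Om^{0,\ov\alpha}(\cdot,\zeta)\|^2_{A^2(\Om)}=K_\Om^{\alpha,\ov\alpha}(\zeta,\zeta)\le A_\zeta(\alpha!)^2r_\zeta^{-2|\alpha|}$ summed against $|\xi_\alpha|/\alpha!$. It differs only in packaging: it bounds the $L^2$ norm of the pointwise majorant $M=\sum_\alpha\frac{|\xi_\alpha|}{\alpha!}|K_\Om^{0,\ov\alpha}(\cdot,\zeta)|$ via Cauchy--Schwarz and monotone convergence, rather than invoking completeness of $A^2(\Om)$. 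Your formulation also directly gives the $A^2$-convergence of the partial sums, which the paper later needs in Remark \ref{rem_rxi}.
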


\begin{proof}
For $N\geq 0$, define the index set
\[
I_N:=\{\alpha\in\mathbb N^n:|\alpha|\leq N\}
\]
and define the partial sums
\[
M_N(z)
:=
\sum_{\alpha\in I_N}
\frac{|\xi_\alpha|}{\alpha!}
\left|K_\Omega^{0,\ov\alpha}(z,\zeta)\right|.
\]
We shall prove that the functions $M_N$ are uniformly bounded in
$L^2(\Omega)$. By expanding ${M_N}^2$, we get
\[
\int_\Omega M_N(z)^2\,dV(z)
=
\sum_{\alpha,\beta\in I_N}
\frac{|\xi_\alpha||\xi_\beta|}{\alpha!\beta!}
\int_\Omega
\left|K_\Omega^{0,\ov\alpha}(z,\zeta)\right|
\left|K_\Omega^{0,\ov\beta}(z,\zeta)\right|
\,dV(z).
\]
By Cauchy--Schwarz,
\[
\int_\Omega
\left|K_\Omega^{0,\ov\alpha}(z,\zeta)\right|
\left|K_\Omega^{0,\ov\beta}(z,\zeta)\right|
\,dV(z)
\leq
\left\|K_\Omega^{0,\ov\alpha}(\cdot,\zeta)\right\|_{A^2(\Om)}
\left\|K_\Omega^{0,\ov\beta}(\cdot,\zeta)\right\|_{A^2(\Om)}.
\]
Using the reproducing identity for the derivatives of the Bergman kernel,
\[
\left\|K_\Omega^{0,\ov\alpha}(\cdot,\zeta)\right\|_{A^2(\Om)}^2
=
K_\Omega^{\alpha,\ov\alpha}(\zeta,\zeta).
\]
Therefore,
\[
\int_\Omega M_N(z)^2\,dV(z)
\leq
\left(
\sum_{\alpha\in I_N}
\frac{|\xi_\alpha|}{\alpha!}
\left(K_\Omega^{\alpha,\ov\alpha}(\zeta,\zeta)\right)^{1/2}
\right)^2.
\]
Now choose $r_{\zeta}>0$ such that the closed polydisc $\overline{P(\zeta,r)}$ is compactly contained in $\Omega$. Then, there exists $A_{\zeta}>0$ such that
\[
|K_\Omega(w,\eta)|\leq A_{\zeta}
\]
for all $w,\eta\in \overline{P(\zeta,r_{\zeta})}$. By the several-variable Cauchy estimate,
\[
\left|K_\Omega^{\alpha,\ov\alpha}(\zeta,\zeta)\right|
\leq
A_{\zeta}\frac{(\alpha!)^2}{(r_{\zeta})^{2|\alpha|}}.
\]
Substituting this estimate gives
\[
\int_\Omega M_N(z)^2\,dV(z)
\leq
A_{\zeta}
\left(
\sum_{\alpha\in I_N}
|\xi_\alpha|(r_{\zeta})^{-|\alpha|}
\right)^2
\leq
A_{\zeta}
\left(
\sum_{\alpha\in \mathbb{N}^n}
|\xi_\alpha|(r_{\zeta})^{-|\alpha|}
\right)^2
< \infty,
\]
because $\xi\in \ell^1$.
Thus, there is a constant $C=C(\zeta,\xi)>0$, independent of $N$, such that
\[
\int_\Omega M_N(z)^2\,dV(z)\leq C.
\]
Since $M_N(z)$ increases pointwise to
\[
M(z):=
\sum_{\alpha\in\mathbb N^n}
\frac{|\xi_\alpha|}{\alpha!}
\left|K_\Omega^{0,\ov\alpha}(z,\zeta)\right|,
\]
the monotone convergence theorem gives
\[
\int_\Omega M(z)^2\,dV(z)
=
\lim_{N\to\infty}
\int_\Omega M_N(z)^2\,dV(z)
\leq C<\infty.
\]
Hence $M\in L^2(\Omega)$.
Finally,
\[
\left|
\sum_{\alpha\in\mathbb N^n}
\frac{\ov{\xi_\alpha}}{\alpha!}
K_\Omega^{0,\ov\alpha}(z,\zeta)
\right|
\leq
M(z),
\]
and $M\in L^2(\Omega)$. Therefore, $R_{\xi}(\cdot,\zeta)\in L^2(\Om)$, which proves the proposition.
\end{proof}

\begin{rem}\label{rem_rxi}
Varying $\zeta$ over a fixed compact set $K\subset\Omega$ in the
proof of Proposition \ref{rxi}, the constants $r_{\zeta}$ and $A_{\zeta}$ may be
chosen independently of $\zeta\in K$. The resulting estimates will show that the sequence of partial sums
\[
R_{\xi,N}(\cdot,\zeta): = \sum_{\alpha\in I_{N}}
\frac{\ov{\xi_\alpha}}{\alpha!}
K_{\Omega}^{0,\ov\alpha}(\cdot,\zeta),
\qquad
N\in\mathbb{Z}^+
\]
converges to
$R_{\xi}(\cdot,\zeta)$ in $A^2(\Omega)$, uniformly for $\zeta$ in
compact subsets of $\Omega$.
\end{rem}

We are now in a position to introduce the central object, the positive-definite kernel $B_{\xi}$, of this paper and establish its basic analytic and geometric properties.

\begin{defn}\label{D: The xi-Bergman kernel B}
Let $\Om\subset\mathbb{C}^n$ be a domain and $\xi\in \ell^1$. We define the kernel $B_{\xi}$ associated to $\Om$ by
    \begin{equation}
    B_\xi(z,\zeta)
    :=
    \langle R_\xi(\cdot,\zeta),R_\xi(\cdot,z)\rangle_{A^2(\Omega)}.
    \end{equation}
for $z,\zeta\in\Om.$
\end{defn}

\begin{thm}\label{sxi}
The kernel $B_{\xi}$ has the series representation
    \begin{equation}
    B_\xi(z,\zeta)
    =
    \sum_{\alpha,\beta\in\mathbb{N}^n}
    \frac{\xi_\alpha\overline{\xi_\beta}}{\alpha!\beta!}
    K_\Omega^{\alpha,\ov \beta}(z,\zeta),
    \end{equation}
where the series converges uniformly on all compact subsets of $\Omega\times\Omega$. The kernel $B_{\xi}$ satisfies the following properties :

\begin{enumerate}
    \item $B_\xi(z,\zeta)$ is holomorphic in $z$ and anti-holomorphic in $\zeta$. Hence, $B_{\xi}(z,\zeta)$ is real-analytic on $\Om\times\Om$.
    
    \item $B_\xi$ is Hermitian-symmetric:
    \[
    B_\xi(z,\zeta)
    =
    \overline{B_\xi(\zeta,z)}.
    \]

    \item $B_{\xi}(z, z) \geq 0$ for all $z \in \Omega$.

    \item The Cauchy--Schwarz estimate holds:
    \[
    |B_\xi(z,\zeta)|^2
    \leq
    B_\xi(z,z)B_\xi(\zeta,\zeta).
    \]

    \item $\log B_{\xi}(z, z)$ is a pluri-subharmonic function on $\Omega$. 

    \item $B_\xi$ is positive-definite. That is, for any $z_1,\dots,z_N\in\Omega$ and $c_1,\dots,c_N\in\mathbb C$,
    \[
    \sum_{i,j=1}^{N}
    c_i\overline{c_j}
    B_\xi(z_i,z_j)
    \geq 0.
    \]
\end{enumerate}
\end{thm}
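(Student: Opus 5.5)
The plan is to derive the series representation first and then read off all six properties from the Gram structure $B_\xi(z,\zeta)=\langle R_\xi(\cdot,\zeta),R_\xi(\cdot,z)\rangle$.

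\textbf{Series representation.} By Remark \ref{rem_rxi}, the partial sums $R_{\xi,N}(\cdot,\zeta)$ converge to $R_\xi(\cdot,\zeta)$ in $A^2(\Omega)$, uniformly for $\zeta$ in compact sets. Continuity of the inner product then gives
\[
B_\xi(z,\zeta)=\lim_{N\to\infty}\langle R_{\xi,N}(\cdot,\zeta),R_{\xi,N}(\cdot,z)\rangle .
\]
Expanding the finite sums and using Lemma \ref{der} in the form $\langle K^{0,\ov\beta}_\Omega(\cdot,\zeta),K^{0,\ov\alpha}_\Omega(\cdot,z)\rangle=K^{\alpha,\ov\beta}_\Omega(z,\zeta)$, the coefficient of this term is $\frac{\ov{\xi_\beta}}{\beta!}\cdot\frac{\xi_\alpha}{\alpha!}$. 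This is because the conjugate-linear slot conjugates $\ov{\xi_\alpha}$.

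To upgrade this limit to genuine (unordered) convergence of the double series, locally uniformly, we need a Cauchy estimate. Fix a compact $K$ and $r>0$ with $\overline{P(p,r)}\subset\Omega$ for all $p\in K$, and let $A_K$ bound $|K_\Omega|$ on the union of these closed polydiscs. Then
\[
|K^{\alpha,\ov\beta}_\Omega(z,\zeta)|\le A_K\,\alpha!\beta!\,r^{-|\alpha|-|\beta|},\qquad z,\zeta\in K .
\]
So the terms are dominated by $A_K|\xi_\alpha||\xi_\beta|r^{-|\alpha|-|\beta|}$, which is summable since $\xi\in\ell^1$. The Weierstrass $M$-test gives absolute, locally uniform convergence, and the sum agrees with the limit of the square partial sums. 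This step, justifying the interchange of limit and inner product together with the summation order, is the only technical obstacle. It is handled by the Remark plus the $M$-test bound.

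\textbf{Properties (1)--(4).}
\begin{itemize}
\item[(1)] Each term $K^{\alpha,\ov\beta}_\Omega(z,\zeta)$ is holomorphic in $z$ and antiholomorphic in $\zeta$, and these properties survive locally uniform limits (Weierstrass). Real analyticity follows because a function holomorphic in $(z,\ov\zeta)$ is real analytic.
\item[(2)] This is immediate from $\langle u,v\rangle=\ov{\langle v,u\rangle}$.
\item[(3)] We have $B_\xi(z,z)=\|R_\xi(\cdot,z)\|^2\ge0$.
\item[(4)] This is the Cauchy--Schwarz inequality in $A^2(\Omega)$.
\end{itemize}

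\textbf{Property (6).} We compute
\[
\sum_{i,j} c_i\ov{c_j}B_\xi(z_i,z_j)=\Big\|\sum_i \ov{c_i}\,R_\xi(\cdot,z_i)\Big\|^2\ge0 .
\]
To check this, expand the right side: the coefficient of $\langle R_\xi(\cdot,z_j),R_\xi(\cdot,z_i)\rangle=B_\xi(z_i,z_j)$ is $\ov{c_j}\,c_i$, as required.

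\textbf{Property (5).} Choose an orthonormal basis $\{e_k\}$ of $A^2(\Omega)$. Then
\[
B_\xi(z,z)=\sum_k |\langle R_\xi(\cdot,z),e_k\rangle|^2 .
\]
We identify $\langle e_k,R_\xi(\cdot,z)\rangle$: by Lemma \ref{der} and the $A^2$-convergence of the partial sums,
\[
\langle f,R_\xi(\cdot,z)\rangle=\sum_\alpha \frac{\xi_\alpha}{\alpha!}f^{(\alpha)}(z)=(\xi\cdot f)(z),
\]
which is holomorphic in $z$. Hence
\[
B_\xi(z,z)=\sum_k|(\xi\cdot e_k)(z)|^2=\sup_{\|f\|\le1}|(\xi\cdot f)(z)|^2 .
\]
The identification of this supremum uses Riesz/Cauchy--Schwarz with extremal $f=R_\xi(\cdot,z)/\|R_\xi(\cdot,z)\|$.

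Each $\log|(\xi\cdot f)(z)|^2$ is plurisubharmonic. The supremum is continuous, since it equals $B_\xi(z,z)$, which is real analytic by (1). A continuous supremum of a family of psh functions is psh, so $\log B_\xi(z,z)$ is psh. On the zero set, where $\log B_\xi=-\infty$, the value is allowed, and the function is not identically $-\infty$ unless all $\xi\cdot f$ vanish identically. Alternatively, $\log\sum_k|h_k|^2$ for holomorphic $h_k$ with locally uniform convergence is classically psh.
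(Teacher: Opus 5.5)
Your proposal is correct. For the series representation and properties (1)--(4), (6) it is the paper's argument: square partial sums via Remark~\ref{rem_rxi} and Lemma~\ref{der}, then the $M$-test, then the Gram structure. The only cosmetic difference there is that you apply the Cauchy estimates directly to $K_\Omega^{\alpha,\ov\beta}(z,\zeta)$ on $P(z,r)\times P(\zeta,r)$, while the paper bounds $K_\Omega^{\alpha,\ov\alpha}$ on the diagonal and then uses Cauchy--Schwarz in $A^2(\Omega)$.

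The real difference is in (5).
\begin{itemize}
\item The paper picks an orthonormal basis of the closed span of $\{R_\xi(\cdot,z)\}$. It realizes $\log B_\xi(z,z)$ as the increasing limit of the plurisubharmonic functions $\log\sum_{j\le N}|h_j|^2$ and applies the increasing-limit theorem.
\item You first derive $\langle f,R_\xi(\cdot,z)\rangle=(\xi\cdot f)(z)$ directly from Lemma~\ref{der}. This is the first assertion of Theorem~\ref{rln}, obtained here without circularity. You then write $\log B_\xi(z,z)=\sup_{\|f\|\le1}\log|(\xi\cdot f)(z)|^2$, an upper semicontinuous supremum of plurisubharmonic functions.
\end{itemize}
Your route avoids the basis and the monotone-limit theorem, and it already identifies the diagonal of $B_\xi$ with the extremal quantity $K_\xi$ of Definition~\ref{md1}. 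The paper's route stays within the Gram structure and defers the extremal characterization to Section~3.

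Two small points. First, the holomorphy of $z\mapsto(\xi\cdot f)(z)$ needs the one-line remark that $\sum_\alpha\xi_\alpha f^{(\alpha)}/\alpha!$ converges locally uniformly, by the Cauchy estimates for $f$ and $\xi\in\ell^1$. Second, your Parseval identity $B_\xi(z,z)=\sum_k|(\xi\cdot e_k)(z)|^2$ is not needed for the supremum argument, though it is exactly what turns your ``alternative'' into the paper's proof.
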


\begin{proof}
For $N\geq 0$, consider the sequence of partial sums
\[
R_{\xi,N}(\cdot,\zeta)
=
\sum_{\alpha\in I_N}
\frac{\ov{\xi_\alpha}}{\alpha!}
K_\Omega^{0,\overline{\alpha}}(\cdot,\zeta).
\]
It follows from Remark \ref{rem_rxi} that 
\[
\Vert R_{\xi,N}(\cdot,\zeta) - R_\xi(\cdot,\zeta)\Vert_{A^2(\Om)} \rightarrow 0
\qquad\text{for every }\zeta\in\Om.
\]
Therefore, for fixed $z,\zeta\in\Omega$,
\begin{multline*}
\vert \langle R_{\xi,N}(\cdot,\zeta),R_{\xi,N}(\cdot,z)\rangle
-
\langle R_\xi(\cdot,\zeta),R_\xi(\cdot,z)\rangle\vert
\\
\leq
\vert \langle R_{\xi,N}(\cdot,\zeta)-R_{\xi}(\cdot,\zeta),R_{\xi,N}(\cdot,z)\rangle\vert
+
\vert \langle R_{\xi}(\cdot,\zeta), R_{\xi,N}(\cdot,z) - R_{\xi}(\cdot,z)\rangle\vert 
\longrightarrow
0
\end{multline*}
as $N\rightarrow\infty$. Now,
\begin{eqnarray*}
\langle R_{\xi,N}(\cdot,\zeta),R_{\xi,N}(\cdot,z)\rangle
&=&
\left\langle
\sum_{\beta\in I_N}
\frac{\ov{\xi_\beta}}{\beta!}
K_\Omega^{0,\overline{\beta}}(\cdot,\zeta),
\sum_{\alpha\in I_N}
\frac{\ov{\xi_\alpha}}{\alpha!}
K_\Omega^{0,\overline{\alpha}}(\cdot,z)
\right\rangle  \\
&=&
\sum_{\alpha,\beta\in I_N}
\frac{\xi_\alpha\overline{\xi_\beta}}{\alpha!\beta!}
\left\langle
K_\Omega^{0,\overline{\beta}}(\cdot,\zeta),
K_\Omega^{0,\overline{\alpha}}(\cdot,z)
\right\rangle 
=
\sum_{\alpha,\beta\in I_N}
\frac{\xi_\alpha\overline{\xi_\beta}}{\alpha!\beta!}
K_\Omega^{\alpha,\ov\beta}(z,\zeta).
\end{eqnarray*}
Letting $N\to\infty$, we obtain
\[
B_\xi(z,\zeta)
=
\sum_{\alpha,\beta\in\mathbb N^n}
\frac{\xi_\alpha\overline{\xi_\beta}}{\alpha!\beta!}
K_\Omega^{\alpha,\ov\beta}(z,\zeta).
\]
It remains to prove local uniform convergence. Let $K\subset\Omega$ be
compact. Choose $r>0$ such that, for every $p\in K$, the closed polydisc
$\overline{P(p,r)}$ is compactly contained in $\Omega$. There exists
a constant $A_K>0$ such that
\[
|K_\Omega(w,\eta)|\leq A_K
\]
whenever $w,\eta$ lie in polydiscs $P(p,r)$ for $p\in K$. By the Cauchy estimates,
\[
K_\Omega^{\alpha,\overline{\alpha}}(\zeta,\zeta)
\leq
A_K\frac{(\alpha!)^2}{r^{2|\alpha|}},
\qquad \zeta\in K.
\]
Using Cauchy--Schwarz, for $z,\zeta\in K$,
\begin{eqnarray*}
\left|K_\Omega^{\alpha,\ov\beta}(z,\zeta)\right|
&=&
\left| \langle 
K_{\Om}^{0,\ov\beta} (\cdot, \zeta), K_{\Om}^{0,\ov{\alpha}}(\cdot, z)
\rangle\right|
\\
&\leq&
\left(K_\Omega^{\beta,\ov{\beta}}(\zeta,\zeta)\right)^{1/2}
\left(K_\Omega^{\alpha,\ov{\alpha}}(z,z)\right)^{1/2}  \\
&\leq&
A_K
\frac{\alpha!\beta!}{r^{|\alpha|+|\beta|}}.
\end{eqnarray*}
Consequently,
\[
\left|
\frac{\xi_\alpha\overline{\xi_\beta}}{\alpha!\beta!}
K_\Omega^{\alpha,\ov\beta}(z,\zeta)
\right|
\leq
A_K|\xi_\alpha||\xi_\beta|r^{-|\alpha|-|\beta|}.
\]
Since $\xi\in \ell^1$,
\[
\sum_{\alpha,\beta\in\mathbb N^n}
|\xi_\alpha||\xi_\beta|r^{-|\alpha|-|\beta|}
=
\left(
\sum_{\alpha\in\mathbb N^n}
|\xi_\alpha|r^{-|\alpha|}
\right)^2
<\infty.
\]
Thus, by the Weierstrass $M$-test, we have proved the absolute and local uniform convergence of the series representation of $B_{\xi}$ on $\Omega\times\Omega$.

\medskip

We will now see the properties of the kernel $B_{\xi}$.

\begin{enumerate}

\item Since each term $K_\Omega^{\alpha,\ov\beta}(z,\zeta)$ is holomorphic in $z$ and
anti-holomorphic in $\zeta$, the local uniform convergence implies that
$B_\xi(z,\zeta)$ is holomorphic in $z$ and anti-holomorphic in $\zeta$.
In particular, $B_{\xi}(z,\zeta)$ is real-analytic on $\Om\times\Om$.

\item This is immediate from
\[
B_\xi(z,\zeta)
=
\langle R_\xi(\cdot,\zeta),R_\xi(\cdot,z)\rangle
=
\overline{
\langle R_\xi(\cdot,z),R_\xi(\cdot,\zeta)\rangle
}
=
\overline{B_\xi(\zeta,z)}.
\]

\item This follows from
\[
B_{\xi}(z, z) =  \langle R_{\xi}(\cdot,z), R_{\xi}(\cdot,z)\rangle 
=\|R_\xi(\cdot,z)\|_{A^2(\Omega)}^2 \geq 0.
\]

\item The Cauchy--Schwarz inequality in $A^2(\Omega)$ gives
\begin{eqnarray*}
|B_\xi(z,\zeta)|^2
&=&
|\langle R_\xi(\cdot,\zeta),R_\xi(\cdot,z)\rangle|^2 \\
&\leq&
\|R_\xi(\cdot,\zeta)\|_{A^2(\Omega)}^2
\|R_\xi(\cdot,z)\|_{A^2(\Omega)}^2 \\
&=&
B_\xi(\zeta,\zeta)B_\xi(z,z).
\end{eqnarray*}

\item Let
$\mathcal H$ be the closed subspace of $A^2(\Omega)$ generated by
$\{R_\xi(\cdot,z):z\in\Omega\}$, and let $(e_j)$ be an orthonormal basis
of $\mathcal H$. By Parseval's identity,
\[
B_\xi(z,z)
=
\|R_\xi(\cdot,z)\|_{A^2(\Omega)}^2
=
\sum_j |\langle e_j,R_\xi(\cdot,z)\rangle|^2.
\]
Set
\[
h_j(z):=\langle e_j,R_\xi(\cdot,z)\rangle.
\]
Since $R_{\xi,N}(\cdot,z)$ and $R_\xi(\cdot,z)$ are anti-holomorphic in $z$, one can use Remark \ref{rem_rxi} to see that each $h_j$ is holomorphic in $z$. For $N\geq 1$, define
\[
u_N(z)
:=
\log\left(\sum_{j=1}^{N}|h_j(z)|^2\right),
\]
where we use the convention $\log 0=-\infty$. Each $u_N$ is plurisubharmonic, and the sequence $(u_N)$ is increasing. By Parseval's identity,
\[
u_N(z)\nearrow
\log B_\xi(z,z).
\]
Furthermore, $(u_N)$ is locally bounded above. Indeed, for every
compact set $K\subset\Omega$,
\[
u_N(z)
\leq
\log B_\xi(z,z)
\leq
\log\left(\sup_{w\in K}B_\xi(w,w)\right),
\qquad z\in K,
\]
whenever the supremum is positive. If it is zero, then
$u_N\equiv-\infty$ on $K$. Since $B_\xi(z,z)$ is continuous,
$\log B_\xi(z,z)$, with the convention $\log 0=-\infty$, is upper
semicontinuous. Therefore, by the increasing-limit theorem for
plurisubharmonic functions, $\log B_\xi(z,z)$ is plurisubharmonic on $\Omega$.

\item Let $z_1,\dots,z_N\in\Omega$ and $c_1,\dots,c_N\in\mathbb C$. Then
\begin{eqnarray*}
\sum_{i,j=1}^N
\overline{c_i}c_jB_\xi(z_i,z_j)
&=&
\sum_{i,j=1}^N
\overline{c_i}c_j
\langle R_\xi(\cdot,z_j),R_\xi(\cdot,z_i)\rangle  \\
&=&
\left\|
\sum_{j=1}^N c_jR_\xi(\cdot,z_j)
\right\|_{A^2(\Omega)}^2 
\geq 0.
\end{eqnarray*}
Hence $B_\xi$ is positive-definite.
\end{enumerate}
This completes the proof.
\end{proof}

\subsection{The reproducing kernel Hilbert space of $\mathbf{B_\xi}$}

One of the principal advantages of constructing a positive-definite kernel
is that it canonically gives rise to a reproducing kernel Hilbert space.
Thus, besides realizing the $\xi$-Bergman kernel as the diagonal of the
two-variable kernel $B_\xi$, our construction yields, by the
Moore--Aronszajn theorem \cite[Theorem 2.14]{RKHS}, a unique reproducing
kernel Hilbert space $\mathcal H_\xi$ whose reproducing kernel is $B_\xi$.
More precisely, $\mathcal H_\xi$ is obtained as the completion of the complex
linear span of
\[
\{B_\xi(\cdot,z):z\in\Omega\}
\]
with respect to the canonical inner product
\[
\left\langle
\sum_{i=1}^{N}a_iB_\xi(\cdot,z_i),
\sum_{j=1}^{M}b_jB_\xi(\cdot,w_j)
\right\rangle_{\mathcal H_\xi}
:=
\sum_{i=1}^{N}
\sum_{j=1}^{M}
a_i\overline{b_j}
B_\xi(w_j,z_i),
\qquad
a_i,b_j\in\mathbb{C}.
\]
The following result shows that the abstract Hilbert space $\mathcal{H}_{\xi}$ obtained from this
construction consists entirely of holomorphic functions. For the reader's convenience, we include a proof.

\begin{thm}
Every element of $\mathcal H_\xi$ is represented by a holomorphic function on
$\Omega$. Consequently, $\mathcal H_\xi$ is a holomorphic reproducing kernel
Hilbert space on $\Omega$ with reproducing kernel $B_\xi$.
\end{thm}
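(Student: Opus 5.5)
Every element of $\mathcal H_\xi$ is a pointwise function obtained as a limit in norm of elements of the span of the $B_\xi(\cdot,z)$. I will show that such limits are locally uniform, so that holomorphy passes from the span to the completion.

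First I identify the completion concretely. By Theorem \ref{sxi}(1), each $B_\xi(\cdot,z)$ is holomorphic on $\Omega$, so every finite combination $g=\sum_i a_iB_\xi(\cdot,z_i)$ is holomorphic. On this span the canonical inner product satisfies the reproducing identity $g(w)=\langle g, B_\xi(\cdot,w)\rangle_{\mathcal H_\xi}$, which follows directly from the definition. Cauchy--Schwarz then gives the basic pointwise bound
\[
|g(w)|\le \|g\|_{\mathcal H_\xi}\, B_\xi(w,w)^{1/2}.
\]
Following the Moore--Aronszajn construction, an element of the completion is given by a Cauchy sequence $(g_k)$ in the span, and it is realized as the function $g(w)=\lim_k g_k(w)$. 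This limit exists by the bound above, it is independent of the chosen Cauchy sequence, and distinct elements of the completion give distinct functions. I take this realization as given from \cite[Theorem 2.14]{RKHS}.

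The key step is local uniformity. Let $K\subset\Omega$ be compact. The map $w\mapsto B_\xi(w,w)$ is continuous, since by Theorem \ref{sxi} it is real-analytic, so $C_K:=\sup_{w\in K}B_\xi(w,w)<\infty$. Hence
\[
\sup_{w\in K}|g_k(w)-g_l(w)|\le C_K^{1/2}\,\|g_k-g_l\|_{\mathcal H_\xi}.
\]
So $(g_k)$ is uniformly Cauchy on each compact set and converges locally uniformly to $g$. By the Weierstrass convergence theorem (the holomorphic case in several variables), $g$ is holomorphic on $\Omega$.

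Finally, the reproducing property $g(w)=\langle g,B_\xi(\cdot,w)\rangle$ extends from the span to all of $\mathcal H_\xi$ by continuity of the inner product. Therefore $\mathcal H_\xi$ is a Hilbert space of holomorphic functions with reproducing kernel $B_\xi$. The only delicate point is making precise that completion elements are functions, namely well-definedness and injectivity of the pointwise limit. This is the standard part of Moore--Aronszajn, and if needed I would verify injectivity directly: if $g_k\to0$ pointwise, then $\langle g_k,h\rangle\to0$ for every $h$ in the span, and combined with $(g_k)$ being Cauchy this forces $\|g_k\|\to0$.
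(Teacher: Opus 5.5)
Your proposal is correct and follows essentially the same argument as the paper: the pointwise bound $|g(w)|\le \|g\|_{\mathcal H_\xi}B_\xi(w,w)^{1/2}$, local boundedness of $B_\xi(w,w)$ (which follows from its real-analyticity), and the Weierstrass theorem applied to the locally uniformly Cauchy holomorphic approximants from the span. Your extra check that the pointwise realization of completion elements is well defined and injective makes explicit what the paper simply asserts when it ``identifies'' the limit with $F$.
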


\begin{proof}
For every $F \in \mathcal{H}_{\xi}$ and $z\in\Om$, we have the reproducing property 
\[
F(z) 
=
\langle F,B_\xi(\cdot,z)\rangle_{\mathcal H_\xi}
\]
Therefore,
\[
|F(z)|
=
|\langle F,B_\xi(\cdot,z)\rangle_{\mathcal H_\xi}|
\leq
\|F\|_{\mathcal H_\xi}
\|B_\xi(\cdot,z)\|_{\mathcal H_\xi}
=
\|F\|_{\mathcal H_\xi} \, B_\xi(z,z)^{1/2}.
\]
Given $F\in \mathcal H_\xi$, choose a sequence $(F_m)$ in $\operatorname{span}_{\mathbb{C}}
\{B_\xi(\cdot,z):z\in\Omega\}$ such that
\[
\Vert F_{m} - F\Vert_{\mathcal{H}_{\xi}} \rightarrow 0
\qquad \text{as }m\rightarrow\infty.
\]
Since $B_\xi(w,z)$ is holomorphic in $w$, each
$F_m$ is holomorphic on $\Omega$.
Let $K\subset \Omega$ be compact. Since $B_\xi(z,z)$ is real-analytic, it
is locally bounded. Hence, there exists $C_K>0$ such that
\[
B_\xi(z,z)^{1/2}\leq C_K
\]
for all $z\in K$. Therefore, for $z\in K$,
\[
|F_m(z)-F_\ell(z)|
\leq
\|F_m-F_\ell\|_{\mathcal H_\xi}\,B_\xi(z,z)^{1/2} \\
\leq
C_K\|F_m-F_\ell\|_{\mathcal H_\xi}.
\]
Since $(F_m)$ is Cauchy in $\mathcal H_\xi$, it follows that $(F_m)$ is
Cauchy in the space of holomorphic functions on $\Omega$. Hence, $F_m$ converges locally
uniformly to a holomorphic function on $\Omega$. We identify this locally
uniform limit with $F$. Thus, every element of $\mathcal H_\xi$ is holomorphic.
\end{proof}


\section{Realizing the $\xi$-Bergman kernel as the diagonal of $B_{\xi}$}

We first recall the definitions of the $\xi$-Bergman kernel and the off-diagonal $\xi$-Bergman kernel, introduced by Bao--Guan and Bao--Guan--Sun, respectively.

\begin{defn}[Bao--Guan, \cite{Bao-Guan_1, Bao-Guan_2}]\label{md1}
Let $\Om\subset\mathbb{C}^n$ be a domain and $\xi\in \ell^1$. For $F\in A^2(\Om)$ and $z\in\Om$, define the linear functional 
\[
(\xi\cdot F)(z)
=
\sum_{\alpha\in\mathbb N^n}
\xi_\alpha\frac{F^{(\alpha)}(z)}{\alpha!}.
\]
The $\xi$-Bergman kernel $K_{\xi}(z)$ of $\Om$ is defined by
\begin{equation}
K_{\xi}(z) := \sup_{0\neq F\in A^2(\Om)} \frac{\vert (\xi\cdot F)(z)\vert^2}{\int_{\Om}\vert F\vert^2}.
\end{equation}
\end{defn}

\begin{defn}[Bao--Guan--Sun, \cite{Bao-Guan-Sun}]\label{md2}
Let $\Om\subset\mathbb{C}^n$ be a domain and $\xi\in \ell^1$. Define
\[
m_{\xi}(z)
:=
\inf\left\{
\|F\|_{A^2(\Omega)}:
F\in A^2(\Omega),\;(\xi\cdot F)(z)=1
\right\},
\]
and let $m_{\xi}(\cdot,z)$ denote the unique function that solves the above extremal problem. It is shown that
\[
K_{\xi}(z) = \frac{1}{m_{\xi}(z)^2}.
\]
The off-diagonal $\xi$-Bergman kernel $K_{\xi}(w,z)$ is defined by
\begin{equation}
K_{\xi}(w,z) : = m_{\xi}(w,z) K_{\xi}(z).
\end{equation}
\end{defn}

\begin{thm}\label{rln}
Let $\Om\subset\mathbb{C}^n$ be a domain and $\xi\in \ell^1$. For every $\zeta\in\Om$, the kernel function $R_{\xi}(\cdot,\zeta)$ is the unique element in $A^2(\Om)$ such that
\begin{equation}
(\xi\cdot F)(\zeta) = \langle F, R_{\xi}(\cdot,\zeta)\rangle_{A^2(\Om)}
\end{equation}
for all $F\in A^2(\Om)$. Consequently, the $\xi$-Bergman kernel is given by
\begin{equation}
K_{\xi}(\zeta)
=
\|R_\xi(\cdot,\zeta)\|^2_{A^2(\Omega)}
=
B_\xi(\zeta,\zeta),
\end{equation}
while the off-diagonal $\xi$-Bergman kernel is given by
\begin{equation}
K_{\xi}(z,\zeta)
=
R_\xi(z,\zeta)
\end{equation}
for all $z,\zeta\in\Om$.
\end{thm}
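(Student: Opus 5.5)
The plan is first to establish the reproducing identity $(\xi\cdot F)(\zeta)=\langle F,R_\xi(\cdot,\zeta)\rangle$, and then to read off the formulas for $K_\xi$ and the off-diagonal kernel from standard Hilbert space facts.

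\emph{Reproducing identity.} Fix $\zeta\in\Om$ and $F\in A^2(\Om)$. By Lemma \ref{der}, for each finite index set $I_N$ we have
\[
\sum_{\alpha\in I_N}\xi_\alpha\frac{F^{(\alpha)}(\zeta)}{\alpha!}=\sum_{\alpha\in I_N}\frac{\xi_\alpha}{\alpha!}\langle F,K_\Om^{0,\ov\alpha}(\cdot,\zeta)\rangle=\langle F,R_{\xi,N}(\cdot,\zeta)\rangle .
\]
The conjugate on $\ov{\xi_\alpha}$ in the definition of $R_\xi$ is what makes the antilinearity in the second slot come out correctly. As $N\to\infty$, the left side converges to $(\xi\cdot F)(\zeta)$. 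This holds because $\xi\in\ell^1$ and the Cauchy estimates give $|F^{(\alpha)}(\zeta)|/\alpha!\le C r^{-|\alpha|}$, so the defining series converges absolutely; the partial sums over $I_N$ are an exhaustion of $\mathbb N^n$. The right side converges to $\langle F,R_\xi(\cdot,\zeta)\rangle$ by Remark \ref{rem_rxi}, since $R_{\xi,N}(\cdot,\zeta)\to R_\xi(\cdot,\zeta)$ in $A^2(\Om)$. Uniqueness is immediate: if $G\in A^2(\Om)$ also represents the functional, then $\langle F,R_\xi(\cdot,\zeta)-G\rangle=0$ for all $F$, and taking $F=R_\xi(\cdot,\zeta)-G$ gives $G=R_\xi(\cdot,\zeta)$. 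The one point to check carefully is Remark \ref{rem_rxi}, which the paper only sketches. I would verify it directly with the proof of Proposition \ref{rxi}: the tail satisfies
\[
\|R_\xi-R_{\xi,N}\|\le\sum_{|\alpha|>N}\frac{|\xi_\alpha|}{\alpha!}\|K^{0,\ov\alpha}(\cdot,\zeta)\|\le A_\zeta^{1/2}\sum_{|\alpha|>N}|\xi_\alpha|r_\zeta^{-|\alpha|}\to0,
\]
where the first inequality is the triangle inequality in $A^2(\Om)$.

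\emph{Diagonal formula.} By Cauchy--Schwarz, $|(\xi\cdot F)(\zeta)|^2\le\|F\|^2\|R_\xi(\cdot,\zeta)\|^2$, so $K_\xi(\zeta)\le\|R_\xi(\cdot,\zeta)\|^2$. If $R_\xi(\cdot,\zeta)\ne0$, choosing $F=R_\xi(\cdot,\zeta)$ attains equality. If $R_\xi(\cdot,\zeta)=0$, then the functional vanishes identically and both sides are $0$. Hence $K_\xi(\zeta)=\|R_\xi(\cdot,\zeta)\|^2=B_\xi(\zeta,\zeta)$, the last equality being Definition \ref{D: The xi-Bergman kernel B}.

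\emph{Off-diagonal formula.} The minimal-norm problem defining $m_\xi$ is minimizing $\|F\|$ over the closed affine hyperplane $\{F:\langle F,R\rangle=1\}$, where $R=R_\xi(\cdot,\zeta)$. Its unique minimizer is the element of that hyperplane lying along $R$, namely $m_\xi(\cdot,\zeta)=R/\|R\|^2$, with $m_\xi(\zeta)=1/\|R\|$. Then
\[
K_\xi(z,\zeta)=m_\xi(z,\zeta)K_\xi(\zeta)=\frac{R(z)}{\|R\|^2}\,\|R\|^2=R_\xi(z,\zeta).
\]
This requires $R\ne0$, i.e.\ $K_\xi(\zeta)>0$, which is implicit in Definition \ref{md2}. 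In the degenerate case $K_\xi(\zeta)=0$ the extremal problem is infeasible, and I would note that the identity is interpreted conventionally or excluded there.
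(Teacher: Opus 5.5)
Your proposal is correct and follows essentially the same route as the paper. Both arguments use Lemma \ref{der} together with the $A^2$-convergence $R_{\xi,N}(\cdot,\zeta)\to R_\xi(\cdot,\zeta)$ to get the representation, Riesz uniqueness, and the Cauchy--Schwarz extremal argument giving $m_\xi(\cdot,\zeta)=R_\xi(\cdot,\zeta)/\|R_\xi(\cdot,\zeta)\|^2_{A^2(\Om)}$. The differences are minor. You read off $K_\xi(\zeta)=\|R_\xi(\cdot,\zeta)\|^2_{A^2(\Om)}$ directly from the supremum in Definition \ref{md1}, which handles the case $R_\xi(\cdot,\zeta)=0$ without the $m_\xi=\infty$ convention. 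You also supply the tail estimate behind Remark \ref{rem_rxi}; your infinite triangle inequality there is justified because a norm-absolutely convergent series in $A^2(\Om)$ converges in $A^2(\Om)$ to its pointwise sum.
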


\begin{proof}
For every $\z\in\Om$, the linear functional $\Gamma_{\xi,\z}: A^2(\Om) \ni F\mapsto (\xi\cdot F)(\z)\in\mathbb{C}$, is bounded; see \cite[Lemma 2.4]{Bao-Guan_1}. Using Lemma \ref{der}, we obtain
\[
(\xi\cdot F)(\zeta)
=
\sum_{\alpha\in\mathbb{N}^n}\xi_{\alpha}\frac{F^{(\alpha)}(\zeta)}{\alpha!}
=
\sum_{\alpha\in\mathbb{N}^n}\xi_{\alpha} \frac{1}{\alpha!} \langle F, K_{\Om}^{0,\overline{\alpha}}(\cdot,\zeta)\rangle
=
\left\langle F, \sum_{\alpha\in\mathbb{N}^n}\frac{\ov{\xi_{\alpha}}}{\alpha!} K_{\Om}^{0,\overline{\alpha}}(\cdot,\zeta)\right\rangle
=
\left\langle F, R_{\xi}(\cdot,\zeta)\right\rangle.
\]
Note that interchanging the infinite series and the inner product is justified; see Remark \ref{rem_rxi}. By Proposition \ref{rxi}, we have $R_{\xi}(\cdot,\zeta)\in A^2(\Om)$. Therefore, the uniqueness of $R_{\xi}$ comes from the Riesz Representation theorem. This proves the first assertion. 

\medskip

Let $\zeta\in\Om$ and assume that $R_{\xi}(\cdot,\zeta)\not\equiv 0$. Let $F\in A^2(\Om)$ be such that $(\xi\cdot F)(\zeta) =1$. Then, by the Cauchy--Schwarz inequality,
\[
1 
= 
\vert (\xi\cdot F)(\zeta) \vert
=
\vert \langle F, R_{\xi}(\cdot,\zeta)\rangle_{A^2(\Om)}\vert
\leq
\Vert F\Vert_{A^2(\Om)} \, \Vert R_{\xi}(\cdot, \zeta)\Vert_{A^2(\Om)}
\]
Thus, 
\[
\Vert F\Vert_{A^2(\Om)}
\geq
\frac{1}{\Vert R_{\xi}(\cdot, \zeta)\Vert_{A^2(\Om)}}
\]
with equality holding if and only if $F$ is a scalar multiple of $R_{\xi}(\cdot,\zeta)$. Since
\[
(\xi\cdot R_\xi(\cdot,\zeta))(\zeta)
=
\langle R_\xi(\cdot,\zeta),R_\xi(\cdot,\zeta)\rangle_{A^2(\Omega)}
=
\|R_\xi(\cdot,\zeta)\|^2_{A^2(\Omega)}
\]
and $R_{\xi}(\cdot,\zeta)\in A^2(\Om)$, we have
\begin{equation}
m_{\xi}(\cdot,\zeta)
=
\frac{R_\xi(\cdot,\zeta)}
{\|R_\xi(\cdot,\zeta)\|^2_{A^2(\Omega)}}.
\end{equation}
Therefore,
\[
m_{\xi}(\zeta)
=
\begin{cases}
\frac{1}{\Vert R_{\xi}(\cdot,\zeta)\Vert_{A^2(\Om)}} & \text{if }R_{\xi}(\cdot,\zeta)\not\equiv 0
\\
\infty & otherwise
\end{cases}
\]
Since $K_{\xi}(\zeta) = m_{\xi}(\zeta)^{-2}$, we have proved that
\begin{equation}
K_{\xi}(\zeta)
=
\|R_\xi(\cdot,\zeta)\|^2_{A^2(\Omega)}
=
B_\xi(\zeta,\zeta).
\end{equation}
Hence, the off-diagonal $\xi$-Bergman kernel $K_{\xi}(z,\zeta) = m_{\xi}(z,\zeta) K_{\xi}(\zeta)$ is given by
\begin{equation}
K_{\xi}(z,\zeta)
=
R_\xi(z,\zeta).
\end{equation}
This finishes the proof of the theorem.
\end{proof}

\begin{rem}
\begin{enumerate}

\item The series representations of $B_{\xi}$ and $R_{\xi}$ yield, respectively, the following series representations for the $\xi$-Bergman kernel:
\begin{equation}
K_{\xi}(z)
=
\sum_{\alpha,\beta\in\mathbb{N}^n}
\frac{\xi_\alpha\overline{\xi_\beta}}{\alpha!\beta!}
K_\Omega^{\alpha,\ov\beta}(z,z),
\end{equation}
and for the off-diagonal $\xi$-Bergman kernel:
\begin{equation}
K_{\xi}(z,\zeta)
=
\sum_{\alpha\in\mathbb N^n}
\frac{\overline{\xi_\alpha}}{\alpha!}
K_{\Omega}^{0,\ov\alpha}(z,\zeta).
\end{equation}
The first series converges locally uniformly on $\Om$, and the second converges locally uniformly on $\Om\times\Om$.

\item Since $R_{\xi}(z,\zeta)$ coincides with the off-diagonal
$\xi$-Bergman kernel $K_{\xi}(z,\zeta)$, we shall use the same
notation $K_{\xi}$ for the sake of notational simplicity.

\item The kernel $B_{\xi}(z,w)$ is the unique function on $\Om\times\Om$ which is holomorphic in $z$ and antiholomorphic in $w$ such that $B_{\xi}(z,z) = K_{\xi}(z)$ for all $z\in\Om$. 

\item When $\xi=(1,0,0,\ldots)$, the functional $\xi\cdot F$ is point
evaluation. In this special case,
\[
K_\xi(z,\zeta)=K_\Omega(z,\zeta),
\]
and hence
\[
B_\xi(z,\zeta)
=
\langle K_\Omega(\cdot,\zeta),K_\Omega(\cdot,z)\rangle_{A^2(\Omega)}
=
K_\Omega(z,\zeta).
\]
\end{enumerate}
\end{rem}

\medskip

\begin{term}[The $\xi$-Bergman kernel]
In view of Theorem \ref{rln}, we henceforth call $B_{\xi,\Omega}$ the \emph{$\xi$-Bergman kernel} of $\Omega$.
Indeed, $B_{\xi,\Omega}$ is a Hermitian positive-definite
sesqui-holomorphic kernel whose restriction to the diagonal agrees with
the original $\xi$-Bergman kernel introduced by Bao and Guan:
\[
B_{\xi,\Omega}(z,z)=K_{\xi,\Omega}(z), \qquad z\in\Omega.
\]
By contrast, the off-diagonal $\xi$-Bergman kernel introduced by Bao--Guan--Sun does
not recover the $\xi$-Bergman kernel on the diagonal. Furthermore,
$B_{\xi,\Omega}$ preserves the positive-definite reproducing-kernel
structure characteristic of the classical Bergman kernel.
\end{term}

\medskip

In addition to the $\xi$-Bergman kernel and the off-diagonal
$\xi$-Bergman kernel, Bao--Guan--Sun \cite{Bao-Guan-Sun} considered
the function
\[
(\xi\cdot K_{\xi}(\cdot,\zeta))(z),
\]
and investigated their regularity. By the reproducing property of
$K_{\xi}(z,\zeta)$, one readily obtains
\begin{equation}
(\xi\cdot K_{\xi}(\cdot,\zeta))(z)
=
\langle K_{\xi}(\cdot,\zeta), K_{\xi}(\cdot,z)\rangle
=
B_\xi(z,\zeta).
\end{equation}
They proved that $K_{\xi}(z)\in C^{1,1}_{loc}(\Om)$; that is, $K_{\xi}(z)$ and its first order partial derivatives are locally Lipschitz continuous on $\Om.$ They also showed that the off-diagonal $\xi$-Bergman kernel $K_{\xi}(z,\zeta)$ and the kernel function $(\xi\cdot K_{\xi}(\cdot,\zeta))(z)$ are locally Lipschitz continuous with respect to $\zeta$, uniformly for $z,\zeta$ ranging over compact subsets of $\Om$. As an important consequence of Theorem~\ref{rln}, we strengthen their regularity results in the next theorem.

\begin{thm}\label{T: Real analyticity of xi Bergman kernel K}
Let $\Om\subset\mathbb{C}^n$ be a domain and $\xi\in\ell^1$. The following regularity results hold:
    \begin{enumerate}
        \item The $\xi$-Bergman kernel $K_{\xi}(\zeta)$ is real-analytic on $\Om$. 

        \item The off-diagonal $\xi$-Bergman kernel $K_{\xi}(z,\zeta)$ is holomorphic in $z$ and antiholomorphic in $\zeta$. In particular, $K_{\xi}(z,\zeta)$ is real-analytic on $\Om\times\Om$.

        \item The kernel function $(\xi\cdot K_{\xi}(\cdot,\zeta))(z)$ is holomorphic in $z$ and antiholomorphic in $\zeta$. In particular, $(\xi\cdot K_{\xi}(\cdot,\zeta))(z)$ is real-analytic on $\Om\times\Om$.
     \end{enumerate}
\end{thm}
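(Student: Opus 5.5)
The plan is to reduce all three assertions to properties already established for $R_\xi$ and $B_\xi$. The reduction uses the identifications in Theorem \ref{rln} together with the identity $(\xi\cdot K_{\xi}(\cdot,\zeta))(z)=B_\xi(z,\zeta)$ recorded just before the statement. Once these identifications are in place, nothing new needs to be estimated.

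For (1), Theorem \ref{rln} gives $K_\xi(\zeta)=B_\xi(\zeta,\zeta)$ for every $\zeta\in\Om$. By Theorem \ref{sxi}(1), $B_\xi(z,\zeta)$ is real-analytic on $\Om\times\Om$. Indeed, it is holomorphic in $z$ and antiholomorphic in $\zeta$, so $(z,w)\mapsto B_\xi(z,\ov w)$ is holomorphic on $\Om\times\ov{\Om}$ (here $\ov\Om$ denotes the conjugate domain). The diagonal map $\zeta\mapsto(\zeta,\zeta)$ is real-analytic from $\Om$ into $\Om\times\Om$. Restricting a real-analytic function to the image of this map gives a real-analytic function of $\zeta$. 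Alternatively, one can expand $B_\xi(z,\zeta)=\sum c_{\gamma\delta}(z-p)^\gamma\ov{(\zeta-p)}^\delta$ near a point $p$, set $z=\zeta$, and obtain a convergent power series in $\zeta-p$ and $\ov{\zeta-p}$.

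For (2), Theorem \ref{rln} identifies $K_\xi(z,\zeta)$ with $R_\xi(z,\zeta)$. Proposition \ref{rxi_0} already shows that $R_\xi$ is holomorphic in $z$, antiholomorphic in $\zeta$, and real-analytic on $\Om\times\Om$.

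For (3), I will first justify the displayed identity. Theorem \ref{rln} applied at the point $z$ with $F=R_\xi(\cdot,\zeta)\in A^2(\Om)$ gives $(\xi\cdot R_\xi(\cdot,\zeta))(z)=\langle R_\xi(\cdot,\zeta),R_\xi(\cdot,z)\rangle=B_\xi(z,\zeta)$. The conclusion then follows from Theorem \ref{sxi}(1).

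The only point needing care is the step in (1) from separate holomorphy/antiholomorphy to joint real analyticity and then to restriction to the diagonal. The standard argument is Hartogs' theorem applied to $B_\xi(z,\ov w)$. One could also bypass Hartogs entirely, since the locally uniformly convergent series of Theorem \ref{sxi} consists of terms $K_\Om^{\alpha,\ov\beta}$, each jointly holomorphic in $(z,\ov\zeta)$. Local uniform limits of holomorphic functions in $(z,\ov\zeta)$ are holomorphic, which gives joint real analyticity directly.
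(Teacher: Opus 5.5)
Your proposal is correct and takes the same approach as the paper. The paper's proof is the one-line remark that the theorem follows from Proposition \ref{rxi_0}, Theorem \ref{sxi} and Theorem \ref{rln}, using exactly your identifications $K_\xi(\zeta)=B_\xi(\zeta,\zeta)$, $K_\xi(z,\zeta)=R_\xi(z,\zeta)$ and $(\xi\cdot K_\xi(\cdot,\zeta))(z)=B_\xi(z,\zeta)$; your remarks on joint real analyticity and on restricting to the diagonal fill in details the paper leaves implicit.
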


\begin{proof}
    This is a direct consequence of Proposition \ref{rxi_0}, Theorem \ref{sxi}, Theorem \ref{rln}.
\end{proof}

We conclude this section with two further consequences of Theorem \ref{rln}.

\begin{cor}[Strict positive definiteness]
If $\Om$ is a bounded domain and $0\neq \xi\in \ell^1$, the $\xi$-Bergman kernel $B_{\xi}$ is strictly positive-definite. That is, if $z_1,\ldots,z_N\in \Omega$ are distinct and
$c_1,\ldots,c_N\in \mathbb C$ are not all zero, then
\[
\sum_{i,j=1}^N c_i\overline{c_j}B_\xi(z_i,z_j)>0.
\]
\end{cor}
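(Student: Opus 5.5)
The approach is to reduce strict positivity to a uniqueness statement in $A^2(\Om)$ and then test it against exponential functions.

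\textbf{Step 1 (reduction).} By the computation in part (6) of Theorem \ref{sxi}, the quadratic form $\sum_{i,j} c_i\overline{c_j}B_\xi(z_i,z_j)$ equals $\|G\|^2_{A^2(\Om)}$ with
\[
G=\sum_j \overline{c_j}\,R_\xi(\cdot,z_j).
\]
Up to relabeling of conjugates, this is the same as the expression written in that proof. So it suffices to show that $G=0$ forces all $c_j=0$.

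\textbf{Step 2 (pass to functionals).} By Theorem \ref{rln}, $G=0$ means
\[
\sum_j c_j\,(\xi\cdot F)(z_j)=\langle F,G\rangle=0 \qquad\text{for every } F\in A^2(\Om).
\]

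\textbf{Step 3 (test functions).} Since $\Om$ is bounded, every entire function lies in $A^2(\Om)$, in particular $F_\lambda(z)=e^{\langle\lambda,z\rangle}$ with $\langle\lambda,z\rangle=\sum_k\lambda_kz_k$ and $\lambda\in\mathbb C^n$. We have $F_\lambda^{(\alpha)}=\lambda^\alpha F_\lambda$. Hence
\[
(\xi\cdot F_\lambda)(z)=\varphi(\lambda)\,e^{\langle\lambda,z\rangle},\qquad \varphi(\lambda):=\sum_\alpha \xi_\alpha\frac{\lambda^\alpha}{\alpha!}.
\]
This series converges absolutely and locally uniformly, because $\sum_\alpha|\xi_\alpha|\rho^{|\alpha|}<\infty$ for every $\rho$. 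So $\varphi$ is an entire function on $\mathbb C^n$. Its Taylor coefficients are $\xi_\alpha/\alpha!$, so $\xi\neq0$ gives $\varphi\not\equiv0$. Step 2 then yields
\[
\varphi(\lambda)\sum_j c_j e^{\langle\lambda,z_j\rangle}=0 \qquad\text{for all }\lambda.
\]
The zero set of $\varphi$ is nowhere dense. By continuity, the entire function $E(\lambda)=\sum_j c_je^{\langle\lambda,z_j\rangle}$ therefore vanishes identically.

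\textbf{Step 4 (linear independence of exponentials).} This is the main point, though it is classical. Because the $z_j$ are distinct, we can pick $v\in\mathbb C^n$ such that the numbers $a_j=\langle v,z_j\rangle$ are pairwise distinct; this avoids finitely many hyperplanes. Restricting $E$ to $\lambda=tv$ with $t\in\mathbb C$ gives $\sum_j c_je^{a_jt}\equiv0$. Differentiating $k$ times at $t=0$ gives $\sum_j c_ja_j^k=0$ for $k=0,\dots,N-1$. This is a Vandermonde system with distinct nodes, so all $c_j=0$.

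The only delicate points are two. First, one should keep track of conjugations in Step 1 so that the quadratic form is exactly the norm squared of a combination of the $R_\xi(\cdot,z_j)$. Second, one should justify that $\varphi\not\equiv0$, which follows because $\varphi$ generates the $\xi_\alpha$ as Taylor coefficients. Boundedness of $\Om$ is used only to guarantee $F_\lambda\in A^2(\Om)$.
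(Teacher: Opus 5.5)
Your proposal is correct and follows the paper's proof essentially step for step. Both arguments reduce to the linear independence of the representatives $R_\xi(\cdot,z_j)=K_\xi(\cdot,z_j)$, test against $e^{\lambda\cdot z}$ to factor out the nonzero entire function $\sum_\alpha \xi_\alpha\lambda^\alpha/\alpha!$, and conclude using distinct exponents along a generic line. Your explicit tracking of conjugates in Step 1 and your Vandermonde argument in Step 4 simply spell out two points the paper states without detail.
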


\begin{proof}
Let $z_1,\ldots,z_N\in\Om$ be distinct. For $c_1,\ldots, c_N \in\mathbb{C}$, not all zero, we need to show that 
 \[
\left\|
\sum_{j=1}^N c_jK_\xi(\cdot,z_j)
\right\|_{A^2(\Omega)}^2 
=
\sum_{i,j=1}^N c_i\overline{c_j}B_\xi(z_i,z_j)>0.
\]
This is equivalent to showing that the functions $K_\xi(\cdot,z_1),\ldots,K_\xi(\cdot,z_N)$ are linearly independent. Suppose, on the contrary, there exist $c_1,\ldots,c_N \in\mathbb{C}$, not all zero, such that
\[
\sum_{j=1}^N \overline{c_j}\,K_\xi(\cdot,z_j)=0.
\]
By the reproducing property of the off-diagonal $\xi$-Bergman kernel, we have for every $F\in A^2(\Om)$,
\[
\sum_{j=1}^N c_j(\xi\cdot F)(z_j)=0.
\]
Since $\Om$ is bounded, for every $\lambda\in\mathbb{C}^n$, the function $F_{\lambda}(z) = \exp (\lambda\cdot z)$ lies in $A^2(\Om)$. For $\alpha\in\mathbb{N}^n$, we have $F_{\lambda}^{(\alpha)}(z) = \lambda^{\alpha} \,\exp(\lambda\cdot z)$. Therefore,
\[
0 = \sum_{j=1}^N c_j(\xi\cdot F_{\lambda})(z_j) 
=
\sum_{j=1}^N c_j \sum_{\alpha\in\mathbb{N}^n} \xi_{\alpha} \frac{\lambda^{\alpha}\,\exp(\lambda\cdot z_j)}{\alpha !}
=
\sum_{j=1}^N c_j \exp(\lambda\cdot z_j)\sum_{\alpha\in\mathbb{N}^n} \xi_{\alpha} \frac{\lambda^{\alpha}}{\alpha !}.
\]
Since $\xi\in \ell^1$, the function
\[
\Phi_{\xi}(\lambda) = \sum_{\alpha\in\mathbb{N}^n} \xi_{\alpha} \frac{\lambda^{\alpha}}{\alpha !}
\]
is entire for $\lambda\in\mathbb{C}^n$. Moreover, $\Phi_{\xi}$ is not identically zero as $\xi\neq 0$. Therefore, by the identity theorem, the entire function
\[
\sum_{j=1}^N c_j \exp(\lambda\cdot z_j) = 0
\qquad\text{for every }\lambda\in\mathbb{C}^n.
\]
Choose $\lambda_0\in\mathbb{C}^n$ not in the union of finitely many complex hyperplanes
\[
\bigcup_{i\neq j}\{\lambda\in\mathbb C^n: \lambda\cdot(z_i-z_j)=0\}.
\]
Then, $\lambda_0\cdot z_j$ are distinct for all $j$. Thus, $\exp(t\lambda_0\cdot z_j)$ are linearly independent functions such that
\[
\sum_{j=1}^N c_j \exp(t\lambda_0\cdot z_j) = 0
\qquad\text{for every } t\in\mathbb{C}.
\]
This forces $c_j = 0$ for each $j$. Hence, a contradiction. This concludes the proposition.
\end{proof}

\begin{cor}[Ramadanov-type convergence]
Let $\{\Omega_j\}_{j\geq 1}$ be an increasing sequence of domains in
$\mathbb C^n$, and let
\[
\Omega=\bigcup_{j=1}^{\infty}\Omega_j.
\]
If $\xi\in \ell^1$, then we have the convergence of the off-diagonal $\xi$-Bergman kernels
\[
K_{\xi,\Omega_j}(z,w)\longrightarrow K_{\xi,\Omega}(z,w)
\]
locally uniformly on $\Omega\times\Omega$. Likewise, we have the convergence of the $\xi$-Bergman kernels
\[
B_{\xi,\Omega_j}(z,w)\longrightarrow B_{\xi,\Omega}(z,w)
\]
locally uniformly on $\Omega\times\Omega$.
\end{cor}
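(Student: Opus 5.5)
The plan is to reduce both statements to the classical Ramadanov theorem via the series representations. For increasing exhaustions $\Omega_j \nearrow \Omega$, the classical result gives $K_{\Omega_j}(z,w)\to K_\Omega(z,w)$ locally uniformly on $\Omega\times\Omega$. Both sides are holomorphic in $z$ and antiholomorphic in $w$, so Weierstrass/Cauchy estimates upgrade this to locally uniform convergence of every mixed derivative:
\[
K_{\Omega_j}^{\alpha,\ov\beta}\to K_\Omega^{\alpha,\ov\beta}.
\]
It then remains to interchange limit and sum in
\[
K_{\xi,\Omega_j}(z,w)=\sum_\alpha \frac{\ov{\xi_\alpha}}{\alpha!}K_{\Omega_j}^{0,\ov\alpha}(z,w)
\]
and in the double series for $B_{\xi,\Omega_j}$ from Theorem \ref{sxi}, together with Theorem \ref{rln}.

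Fix a compact $L\subset\Omega$. I will choose $r>0$ so that the compact set $L_r:=\bigcup_{p\in L}\overline{P(p,r)}$ lies in $\Omega$. Then $L_r\subset\Omega_{j_0}$ for some $j_0$, since the $\Omega_j$ form an increasing open cover of a compact set. The key step is a uniform bound: there is $A>0$ with
\[
|K_{\Omega_j}(z,w)|\le A \quad \text{for all } z,w\in L_r \text{ and all } j\ge j_0.
\]
I would get this from monotonicity of the diagonal Bergman kernel, $K_{\Omega_j}(z,z)\le K_{\Omega_{j_0}}(z,z)$ for $j\ge j_0$. Combined with Cauchy--Schwarz, $|K_{\Omega_j}(z,w)|^2\le K_{\Omega_j}(z,z)K_{\Omega_j}(w,w)$, this bounds everything by $\sup_{L_r}K_{\Omega_{j_0}}(z,z)<\infty$. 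Alternatively, local uniform convergence itself yields the bound for large $j$.

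The Cauchy estimates then give, for $z,w\in L$ and all $j\ge j_0$,
\[
\Bigl|\tfrac{\xi_\alpha}{\alpha!}K_{\Omega_j}^{0,\ov\alpha}(z,w)\Bigr|\le A|\xi_\alpha|r^{-|\alpha|},
\qquad
\Bigl|\tfrac{\xi_\alpha\ov{\xi_\beta}}{\alpha!\beta!}K_{\Omega_j}^{\alpha,\ov\beta}(z,w)\Bigr|\le A|\xi_\alpha||\xi_\beta|r^{-|\alpha|-|\beta|}.
\]
These are summable majorants independent of $j$, because $\xi\in\ell^1$. Given $\varepsilon>0$, I will take $N$ so that the tails beyond $I_N$ (respectively $I_N\times I_N$) are less than $\varepsilon$ for every $j$ and for $\Omega$ as well. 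The finitely many remaining terms converge uniformly on $L\times L$ by the derivative convergence. A $3\varepsilon$ argument then concludes both statements; this is dominated convergence for series, applied uniformly.

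The main obstacle is the $j$-uniform bound on a neighbourhood $L_r$ of $L$, which is needed so that the Cauchy-estimate majorant is independent of $j$. I would handle it by the monotonicity argument above. Alternatively, I could cite the locally uniform convergence of $K_{\Omega_j}$ on the compact set $L_r\times L_r$, which automatically gives uniform boundedness for $j$ large.
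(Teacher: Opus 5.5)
Your proposal is correct and follows essentially the same route as the paper. Both use the classical Ramadanov theorem plus Cauchy estimates to get convergence of each $K_{\Omega_j}^{\alpha,\ov\beta}$, a $j$-independent bound for $K_{\Omega_j}$ on the compact neighbourhood $L_r\subset\Omega_{j_0}$ giving $j$-independent summable majorants $A|\xi_\alpha||\xi_\beta|r^{-|\alpha|-|\beta|}$, and a tail-plus-finite-partial-sum argument. The one difference is that you justify the uniform bound via the monotonicity $K_{\Omega_j}(z,z)\le K_{\Omega_{j_0}}(z,z)$ and Cauchy--Schwarz, whereas the paper simply asserts that bound.
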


\begin{proof}
By the classical Ramadanov theorem (see \cite{KrantzSCV}),
\[
K_{\Omega_j}(z,w)\longrightarrow K_\Omega(z,w)
\]
locally uniformly on $\Omega\times\Omega$. By the Cauchy
estimates, for every $\alpha,\beta\in\mathbb N^n$,
\[
K_{\Omega_j}^{\alpha,\ov\beta}(z,w)
\longrightarrow K_\Omega^{\alpha,\ov\beta}(z,w)
\]
locally uniformly on $\Omega\times\Omega$.
Let $K\subset\Omega$ be compact. Choose $r>0$ such that the polydisc $P(p,r)$ is compactly contained in $\Om$ for each $p\in K$. The set
\[
K_{r}:=\bigcup_{p\in K}\overline{P(p,r)}
\]
is a compact subset of $\Om$. There exists
$j_0\in \mathbb{N}$ such that $K_{r}\subset \Omega_j$, for all $j\ge j_0$.
Hence there exists a constant
$C_K>0$, independent of $j\ge j_0$, such that
\[
\sup_{j\ge j_0}\sup_{u,v\in K_{r}}
|K_{\Omega_j}(u,v)|\le C_K.
\]
Therefore, by the Cauchy estimates, for every
$\alpha,\beta\in\mathbb{N}^n$,
\[
\left|K_{\Omega_j}^{\alpha,\ov\beta}(z,w)\right|
\le
C_K\frac{\alpha!\beta!}{r^{|\alpha|+|\beta|}},
\qquad z,w\in K,\quad j\ge j_0.
\]
Since $\xi\in\ell^{1}$,
\[
   \sum_{\alpha\in\mathbb N^n}
       |\xi_\alpha|r^{-|\alpha|}<\infty.
\]
Thus, the series
\[
 K_{\xi,\Omega_j}(z,w)
   =\sum_{\alpha\in\mathbb N^n}
      \frac{\ov{\xi_\alpha}}{\alpha!}
      K_{\Omega_j}^{0,\ov\alpha}(z,w)
\]
and
\[
 B_{\xi,\Omega_j}(z,w)
   =\sum_{\alpha,\beta\in\mathbb N^n}
      \frac{\xi_\alpha\overline{\xi_\beta}}
           {\alpha!\beta!}
      K_{\Omega_j}^{\alpha,\ov\beta}(z,w)
\]
have tails which are uniformly small on $K\times K$, independently
of sufficiently large $j$. The convergence of each finite partial
sum therefore yields the local uniform convergence results for the off-diagonal $\xi$-Bergman kernel and the $\xi$-Bergman kernel.
\end{proof}


\section{The $\xi$-Bergman Kernel of the Upper Half-Plane}

\noindent Let $\mathbb H=\{z\in\mathbb C:\Im z>0\}$ be the upper half-plane. The Bergman kernel of $\mathbb H$ is
\begin{equation}
K(z,w)= -\frac1{\pi(z-\ov w)^2}.
\end{equation}
For $\xi\in\ell^{1}$, we compute the $\xi$-Bergman kernel $B_{\xi}(z,w)$ and the off-diagonal $\xi$-Bergman kernel $K_{\xi}(z,w)$ for $\mathbb{H}$.

\begin{thm}\label{T: Xi-Kernel for H}
    Let $\mathbb H=\{z\in\mathbb C:\Im z>0\}$ be the upper half-plane and $\xi\in \ell^{1}$. Then there exist entire functions $F_{\xi}$ and $G_{\xi}$ such that
    \begin{enumerate}
    \item the off-diagonal $\xi$-Bergman kernel is given by
    \begin{equation}\label{E: the off diagonal kernel for upper half plane}
    K_{\xi}(z,w) = \frac{-1}{\pi(z-\ov w)^2} G_{\xi}\!\left(\frac1{z-\ov w}\right).
    \end{equation}
    \item the $\xi$-Bergman kernel is given by
    \begin{equation}\label{E: the xi-kernel for upper half plane}
    B_{\xi}(z,w) = \frac{-1}{\pi (z-\ov{w})^{2}} F_{\xi}\left(\frac{1}{z-\ov{w}}\right).
    \end{equation}
    \end{enumerate}
\end{thm}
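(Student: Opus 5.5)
We compute directly from the series representations in Theorem \ref{rln} (the Remark following it) and Theorem \ref{sxi}, using the explicit kernel $K(z,w) = -\frac{1}{\pi}(z-\ov w)^{-2}$. Set $u = \frac{1}{z-\ov w}$. Differentiating in $\ov w$, each $\partial_{\ov w}$ acting on $(z-\ov w)^{-m}$ produces $m(z-\ov w)^{-m-1}$. Hence
\[
K^{0,\ov k}(z,w) = -\frac{(k+1)!}{\pi}\,(z-\ov w)^{-2-k}.
\]
Similarly, $\partial_z$ on $(z-\ov w)^{-m}$ gives $-m(z-\ov w)^{-m-1}$, so
\[
K^{j,\ov k}(z,w) = -\frac{(-1)^j (j+k+1)!}{\pi}\,(z-\ov w)^{-2-j-k}.
\]
Substituting into $K_\xi(z,w)=\sum_k \frac{\ov{\xi_k}}{k!}K^{0,\ov k}(z,w)$ yields
\[
K_\xi(z,w) = \frac{-1}{\pi(z-\ov w)^2}\, G_\xi(u), \qquad G_\xi(u) := \sum_{k\ge 0} \ov{\xi_k}\,(k+1)\,u^k .
\]
Likewise, $B_\xi(z,w)=\sum_{j,k}\frac{\xi_j\ov{\xi_k}}{j!k!}K^{j,\ov k}(z,w)$ yields
\[
B_\xi(z,w) = \frac{-1}{\pi(z-\ov w)^2}\, F_\xi(u), \qquad F_\xi(u) := \sum_{m\ge0} c_m u^m,
\]
where
\[
c_m = (m+1)!\sum_{j+k=m} \frac{(-1)^j\xi_j\ov{\xi_k}}{j!\,k!}.
\]
The termwise identities are legitimate because both series converge absolutely and locally uniformly on $\mathbb H\times\mathbb H$ by Proposition \ref{rxi_0} and Theorem \ref{sxi}. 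This allows us to regroup the double series by $m=j+k$.

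The remaining step, and the main point, is to show that $G_\xi$ and $F_\xi$ are entire, i.e.\ that their power series have infinite radius of convergence. For $G_\xi$ this is immediate. Since $\sum|\xi_k|\rho^k<\infty$ for every $\rho>0$, we have $|\xi_k|\le C_\rho \rho^{-k}$ for every $\rho$. Therefore $\sum (k+1)|\xi_k| R^k$ converges for every $R$, by comparing with $\rho=2R$.

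For $F_\xi$, note that $(m+1)!/(j!k!) = (m+1)\binom{m}{j}$. Then
\[
|c_m| R^m \le (m+1)\sum_{j+k=m}\binom{m}{j}|\xi_j||\xi_k|R^{j+k}.
\]
Bounding $\binom{m}{j}\le 2^m$ and $(m+1)\le 2^m$ gives
\[
\sum_m |c_m|R^m \le \Bigl(\sum_j |\xi_j|(4R)^j\Bigr)^2<\infty
\]
by the $\ell^1$ hypothesis with $\rho=4R$. So $F_\xi$ is entire as well.

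This factorial bookkeeping is the only real obstacle. Its key insight is that the $(j+k+1)!$ growth of the Bergman kernel derivatives is exactly absorbed by the $1/(j!k!)$ weights, up to a geometric factor. That geometric factor is harmless because the definition of $\ell^1$ here requires summability against every $\rho^{|\alpha|}$.

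Finally, since $|u|$ ranges over all of $(0,\infty)$ as $z,w$ vary in $\mathbb H$, entireness is exactly what is needed for the formulas \eqref{E: the off diagonal kernel for upper half plane} and \eqref{E: the xi-kernel for upper half plane} to make sense globally. We also record the explicit coefficients for later use, for instance with finitely supported $\xi$ or $\xi_n=1/n!$.
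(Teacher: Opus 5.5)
Your proof is correct and follows the paper's argument essentially step for step. You use the same closed form $K^{j,\overline{k}}(z,w)=\frac{(-1)^{j+1}(j+k+1)!}{\pi(z-\overline{w})^{j+k+2}}$, the same $G_\xi$, and the same $F_\xi$ obtained by regrouping along $m=j+k$, and you finish with a coefficient estimate showing both functions are entire. The only difference is in that estimate: the paper bounds $|c_m|\le M^2(m+1)(2\epsilon)^m$ via $|\xi_n|<M\epsilon^n$ and Cauchy--Hadamard, whereas you sum directly to get $\sum_m |c_m|R^m\le\bigl(\sum_j|\xi_j|(4R)^j\bigr)^2$, which also justifies the regrouping by absolute convergence.
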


\begin{proof}
Set $u= (z-\ov w)$ to get $K(z,w)= -\frac1\pi u^{-2}$. To compute $K^{(m,\ov n)}(z,w)$, we note that $\partial_z u=1,\, \partial_{\ov w}u=-1$. Therefore, for any $p >0$,
\[
\partial_z^m(u^{-2}) = (-1)^{m}(m+1)!\,u^{-m-2} \ \ \ \text{and} \ \ \ \partial_{\ov w}^{n}(u^{-p}) = (p)(p+1) \ldots (p+n-1) u^{-p-n}.
\]
Using the above identities, we get $\partial_z^m\partial_{\ov w}^n(u^{-2}) = (-1)^{m}(m+n+1)!\,u^{-m-n-2}$.
Thus, we obtain
\begin{equation}\label{E: Mixed derivative}
K^{(m,\ov n)}(z,w) = \frac{(-1)^{m+1}(m+n+1)!}{\pi(z-\ov w)^{m+n+2}}.
\end{equation}

\begin{enumerate}

\item We recall that the off-diagonal $\xi$-Bergman kernel is given by
\[
K_{\xi}(z,w) = \sum_{n=0}^{\infty} \frac{\overline{\xi_n}}{n!} K^{(0,\overline{n})}(z,w).
\]
Using Equation \ref{E: Mixed derivative}, we get
\[
K_{\xi}(z,w) = \sum_{n=0}^{\infty} \frac{\overline{\xi_n}}{n!} \left( \frac{-1 (n+1)!}{\pi (z - \overline{w})^{n+2}}  \right) = \frac{-1}{\pi (z - \overline{w})^{2}} \sum_{n=0}^{\infty} \frac{(n+1)\overline{\xi_n}}{(z- \overline{w})^{n}},
\]
for all $z,w \in \mathbb{H}$. Define $G_{\xi} : \mathbb{C} \to \mathbb{C}$ by
\begin{equation}\label{E: the entire function G(xi)}
G_{\xi}(t) := \sum_{n=0}^{\infty} (n+1)\overline{\xi_n} t^{n}.
\end{equation}
Note that $\limsup{\abs{(n+1) \overline{\xi_{n}}}^{\frac{1}{n}}} = \limsup{\abs{\overline{\xi_{n}}}^{\frac{1}{n}}} = 0$, and so $G_{\xi}$ is indeed an entire function and satisfies
\[
K_{\xi}(z,w) = \frac{-1}{\pi (z - \overline{w})^{2}} G_{\xi}\left(\frac{1}{z- \overline{w}}\right).
\]

\item Recall that the $\xi$-Bergman kernel is given by 
\[
B_{\xi}(z,w) = \sum_{m,n\ge0} \frac{\xi_m\overline{\xi_n}} {m!n!} K^{(m,\ov n)}(z,w).
\]
Substituting the derivative formula using Equation \ref{E: Mixed derivative}, we get
\begin{align*}
B_{\xi}(z,w) 
&= \sum_{m,n\ge0} \frac{(-1)^{m+1}(m+n+1)! \xi_m\overline{\xi_n}}{\pi \,m!n! (z-\ov w)^{m+n+2}} 
\\ &= 
\frac{-1}{\pi (z - \ov{w})^{2}} \sum_{m,n \ge 0} \frac{(-1)^{m} (m+n+1)! \xi_{m} \ov{\xi_{n}}}{m! n!} \left(\frac{1}{z - \ov{w}}\right)^{m+n}.
\end{align*}
Define $F_{\xi} : \mathbb{C} \to \mathbb{C}$ by
\begin{equation}\label{E: the entire function F}
F_{\xi}(t) := \sum_{m,n \ge 0} \frac{(-1)^{m} (m+n+1)! \xi_{m} \ov{\xi_{n}}}{m! n!} t^{m+n}.
\end{equation}
It is immediate that $B_{\xi}(z,w) = \frac{-1}{\pi (z-\ov{w})^{2}} F_{\xi}\left(\frac{1}{z-\ov{w}}\right)$. Now we only need to check that $F_{\xi}$ is an entire function. Set $k = m+n$, and note that we can write
\[
F_{\xi}(t) = \sum_{k = 0}^{\infty} c_{k} t^{k},
\]
where 
\[
c_{k} = \sum_{m=0}^{k}\frac{(-1)^{m} (k+1)! \xi_m \ov{\xi_{k-m}} }{m! (k-m)!} = (k+1)\sum_{m=0}^{k}\frac{(-1)^{m} (k)! \xi_m \ov{\xi_{k-m}}}{m! (k-m)!}.
\]
Since the power series $\sum_{n=0}^{\infty} \xi_{n} t^{n}$ is an entire function, the Cauchy--Hadamard theorem tells us that $\lim_{n \to +\infty}{\abs{\xi_{n}}^{\frac{1}{n}}} = 0$. Therefore, for any $\epsilon > 0$, there exists $N \in \mathbb{N}$ such that for $n \ge N$
\[
\abs{\xi_{n}} < \epsilon^{n}.
\]
Therefore, there exists $M > 0$, such that 
\begin{equation}\label{E: Bound on xi}
\abs{\xi_{n}} < M\epsilon^{n},
\end{equation}
for all $n \in \mathbb{N}$. Now consider,
\[
\abs{c_{k}} \le (k+1)\sum_{m=0}^{k} \binom{k}{m} \abs{\xi_{k-m}} \abs{\xi_m} < M^{2} (k+1) \epsilon^{k} \sum_{m=0}^{k} \binom{k}{m} = M^2 (k+1) (2 \epsilon)^{k}
\]
This gives us $\lim_{k \to +\infty}{\abs{c_{k}}^{\frac{1}{k}}} = 0$, and thus $F_{\xi}$ is an entire function.
\end{enumerate}
\end{proof}

\no Next, we calculate these kernels explicitly for certain special cases of sequences $\xi$. First, we look at the off-diagonal $\xi$-Bergman kernel.

\begin{thm}\label{T: Explicit R_xi for H}
Let $\mathbb H=\{z\in\mathbb C:\Im z>0\}$ be the upper half-plane.

\begin{enumerate}
    \item If $\xi_{n} = \delta_{nN}$ for some $N\in\mathbb N$, then
    \[
    K_{\xi}(z,w) = -\frac{N+1}{\pi} \frac{1}{(z-\ov w)^{N+2}}.
    \]

    \item If $\xi_n=\frac{1}{n!}$, for $n \ge 0$, then $G_{\xi}(t) = (1+t)  e^{t}$, and thus
    \[
    K_{\xi}(z,w) = -\frac{1}{\pi(z-\ov w)^2} \left(1+\frac1{z-\ov w}\right) e^{\frac1{z-\ov w}}.
    \]
\end{enumerate}
\end{thm}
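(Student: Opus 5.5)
The plan is to read both parts off Theorem \ref{T: Xi-Kernel for H}(1), i.e. off equation \eqref{E: the off diagonal kernel for upper half plane}. By that result,
\[
K_{\xi}(z,w)=\frac{-1}{\pi(z-\ov w)^2}\,G_{\xi}\!\left(\frac{1}{z-\ov w}\right),
\qquad
G_{\xi}(t)=\sum_{n\ge 0}(n+1)\ov{\xi_n}\,t^n,
\]
as in \eqref{E: the entire function G(xi)}. Both cases then reduce to identifying the entire function $G_\xi$ in closed form and substituting $t=1/(z-\ov w)$.

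For part (1), take $\xi_n=\delta_{nN}$. This sequence is finitely supported, so it certainly lies in $\ell^1$. It is real, so $\ov{\xi_n}=\xi_n$, and the series for $G_\xi$ collapses to the single term $G_\xi(t)=(N+1)t^N$. Substituting gives
\[
K_\xi(z,w)=-\frac{1}{\pi(z-\ov w)^2}\cdot\frac{N+1}{(z-\ov w)^N}=-\frac{N+1}{\pi}\,\frac{1}{(z-\ov w)^{N+2}}.
\]
As a cross-check, this agrees with \eqref{E: Mixed derivative} taken with $m=0$, $n=N$ and divided by $N!$.

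For part (2), take $\xi_n=1/n!$. First I verify that $\xi\in\ell^1$: indeed $\sum_n\rho^n/n!=e^\rho<\infty$ for every $\rho>0$. Again $\xi$ is real, so
\[
G_\xi(t)=\sum_{n\ge0}\frac{n+1}{n!}\,t^n=\sum_{n\ge1}\frac{t^n}{(n-1)!}+\sum_{n\ge0}\frac{t^n}{n!}=t e^t+e^t=(1+t)e^t .
\]
Equivalently, $G_\xi(t)=\frac{d}{dt}\bigl(t e^t\bigr)$. Rearranging the terms is legitimate because both power series have infinite radius of convergence. Substituting $t=1/(z-\ov w)$ then yields the stated formula.

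I do not expect a real obstacle; the only step needing care is the series split in part (2), including the index shift in the first sum.
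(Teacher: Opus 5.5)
Your proposal is correct and is essentially the paper's proof: both specialize the representation $K_{\xi}(z,w)=-\frac{1}{\pi(z-\overline{w})^2}\,G_{\xi}\bigl(\frac{1}{z-\overline{w}}\bigr)$ from Theorem~\ref{T: Xi-Kernel for H}, compute $G_\xi(t)=(N+1)t^N$ in part (1) and $G_\xi(t)=e^t+te^t$ in part (2) by the same series split, and substitute $t=1/(z-\overline{w})$. Your extra checks, that $\xi\in\ell^1$ and that $\xi$ is real so the conjugates $\overline{\xi_n}$ are harmless, are small details the paper leaves implicit.
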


\begin{proof}
Recall from Equation \ref{E: the off diagonal kernel for upper half plane} in Theorem~\ref{T: Xi-Kernel for H} that
\[
K_{\xi}(z,w) = -\frac{1}{\pi(z-\overline{w})^2} G_{\xi}\!\left(\frac1{z-\overline{w}}\right),
\]
where $G_{\xi}(t) = \sum_{n=0}^{\infty} (n+1)\overline{\xi_n}t^n$.

\begin{enumerate}

\item For $\xi_{n} =\delta_{nN}$, note that $G_{\xi}(t) = (N+1)t^N$. Therefore,

\[
\begin{aligned}
K_{\xi}(z,w) &= -\frac1{\pi(z-\overline{w})^2} (N+1) \left(\frac1{z-\overline{w}}\right)^N\\ &= -\frac{N+1}{\pi} \frac1{(z-\overline{w})^{N+2}}.
\end{aligned}
\]

\no This proves the first assertion.

\item Now suppose $\xi_n=\frac1{n!}$. Note that

\[
G_{\xi}(t) = \sum_{n=0}^{\infty} \frac{n+1}{n!}t^n = \sum_{n=0}^{\infty} \frac{t^n}{n!} + \sum_{n=0}^{\infty} \frac{nt^n}{n!} = e^{t} + t e^{t} = (1+t) e^{t}.
\]

\no Substituting this into the representation formula yields

\[
K_{\xi}(z,w) = -\frac1{\pi(z-\ov w)^2} \left( 1+\frac1{z-\ov w} \right) e^{\frac1{z-\ov w}},
\]
which proves the second assertion.
\end{enumerate}
\end{proof}

\no For the classical Bergman kernel, the question of identifying the zero set is a very important and well-studied problem. In fact, we call a domain $\Omega \subset \mathbb{C}^n$ \textit{Lu Qi-Keng domain} if its Bergman kernel is zero-free. Although it is now known that a generic bounded domain of holomorphy in $\mathbb{C}^n$ is not a Lu Qi-Keng domain (see \cite{H. Boas}), nice domains like ball and polydisc do have non-vanishing Bergman kernel. Here, we study this problem for the upper half-plane $\mathbb{H}$ for the $\xi$-Bergman kernel.

\begin{defn}
    Let $\xi \in \ell^{1}$. A domain $\Omega \subset \mathbb{C}^n$ is called $\xi$-Lu Qi-Keng domain if the corresponding $\xi$-Bergman kernel of $\Omega$ is non-vanishing, that is, for all $(z,w) \in \Omega \times \Omega$
    \[
    B_{\xi, \Omega}(z,w) \neq 0.
    \]
\end{defn}

\medskip

\no Note that for $z, w \in \mathbb{H}$, the complex number $t = \frac{1}{z - \ov{w}} \in \mathbb{C} \setminus \ov{\mathbb{H}} = \{z \in \mathbb{C} : \Im{z} < 0\}$. Theorem \ref{T: Xi-Kernel for H} tells us that $\mathbb{H}$ will be a $\xi$-Lu Qi-Keng domain if and only if the corresponding entire function $F_{\xi}$ has no zeroes in the lower half plane $\mathbb{C} \setminus \ov{\mathbb{H}}$.

\begin{lem}\label{L: the S_k term}
    For $k \in \mathbb{N}$, let $S_{k} = \frac{1}{(k!)^2}\sum_{m=0}^{k} {(-1)^m \binom{k}{m}^2}$. Then
    \[
    S_{k} = \begin{cases} 
0, & \text{if } k \ \text{is odd} \\
\frac{(-1)^{j}}{(j!)^2 (2j)!}, & \text{if } k = 2j \ \text{is even} 
\end{cases}.
    \]
\end{lem}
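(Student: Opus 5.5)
The plan is to reduce the lemma to the classical identity $\sum_{m=0}^{k}(-1)^m\binom{k}{m}^2$, which I will evaluate by comparing coefficients of $x^k$ in a product of two binomial expansions. Since the prefactor $\frac{1}{(k!)^2}$ is harmless, it suffices to show
\[
T_k:=\sum_{m=0}^{k}(-1)^m\binom{k}{m}^2=\begin{cases}0,& k \text{ odd},\\ (-1)^j\binom{2j}{j},& k=2j.\end{cases}
\]
Once this is established, the even case follows from the direct simplification
\[
\frac{1}{((2j)!)^2}(-1)^j\frac{(2j)!}{(j!)^2}=\frac{(-1)^j}{(j!)^2(2j)!},
\]
which is exactly the claimed value of $S_{2j}$.

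\textbf{Odd $k$.} I would use the symmetry $m\mapsto k-m$. It preserves $\binom{k}{m}^2$ and multiplies $(-1)^m$ by $(-1)^k=-1$, so $T_k=-T_k$ and hence $T_k=0$. This argument needs no generating functions.

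\textbf{General $k$.} I would consider the polynomial $(1-x)^k(1+x)^k=(1-x^2)^k$. On the left, the coefficient of $x^k$ is
\[
\sum_{m=0}^{k}(-1)^m\binom{k}{m}\binom{k}{k-m}=T_k,
\]
using $\binom{k}{k-m}=\binom{k}{m}$. On the right, $(1-x^2)^k=\sum_{i}(-1)^i\binom{k}{i}x^{2i}$ contains only even powers of $x$. So its $x^k$ coefficient is $0$ when $k$ is odd, which recovers the previous paragraph. When $k=2j$, the coefficient is $(-1)^j\binom{2j}{j}$, obtained by taking $i=j$.

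The only step that requires real care is correctly identifying the convolution with $\binom{k}{m}^2$, via the symmetry $\binom{k}{k-m}=\binom{k}{m}$, and keeping track of the sign $(-1)^m$. The remaining steps are routine bookkeeping with factorials. There is also the degenerate case $k=0$: here $S_0=1$, which matches the formula with $j=0$.
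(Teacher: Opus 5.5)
Your proposal is correct and follows essentially the same route as the paper: both compare the coefficient of $x^k$ in $(1-x)^k(1+x)^k$ with that in $(1-x^2)^k$, and then divide by $(k!)^2$. Your extra symmetry argument $m\mapsto k-m$ for odd $k$ is valid but redundant, since the coefficient comparison already gives that case.
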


\begin{proof}
    Let $f(x) = (1-x^{2})^{k}$. Using binomial expansion, $f(x) = \sum_{l=0}^{k} \binom{k}{l} (-1)^{l} (x)^{2l}$. Therefore, the coefficient of $x^{k}$ in $f(x)$ will be
    \begin{equation}\label{E: Coeff of xk 1}
    [x^{k}] = \begin{cases} 
0, & \text{if } k \ \text{is odd} \\
\binom{2j}{j} (-1)^{j}, & \text{if } k = 2j \ \text{is even} 
\end{cases}.
    \end{equation}

    \no Note that we can also write $f(x) = (1 - x)^{k} (1+x)^{k} = \left(\sum_{l=0}^{k} \binom{k}{l} (-1)^{l} x^{l}\right) \left(\sum_{i=0}^{k} \binom{k}{i} x^{i}\right)$. So, the coefficient of $x^{k}$ in $f(x)$ is
    \begin{equation}\label{E: Coeff of xk 2}
        [x^{k}] = \sum_{m=0}^{k} (-1)^{m} \binom{k}{m} \binom{k}{k-m} = \sum_{m=0}^{k} (-1)^{m} \binom{k}{m}^{2} = (k!)^{2} S_{k}.
    \end{equation}
    Comparing Equations \ref{E: Coeff of xk 1} and \ref{E: Coeff of xk 2}, we get
    \[
    S_{k} = \begin{cases} 
0, & \text{if } k \ \text{is odd} \\
\frac{(-1)^{j}}{(j!)^2 (2j)!}, & \text{if } k = 2j \ \text{is even} 
\end{cases}.
    \]
\end{proof}

\begin{prop}\label{P: F-xi}
    The entire function $F_{\xi}$ in Theorem \ref{T: Xi-Kernel for H} satisfies:
    \begin{enumerate}
        \item For $\xi = \{\delta_{nN}\}_{n \ge 0}$ for some $N \in \mathbb{N}$, 
        \[
        F_{\xi}(t) = \frac{(-1)^{N}(2N+1)!}{(N!)^2}t^{2N}.
        \]

        \item For $\xi = \{\frac{1}{n!}\}_{n \ge 0}$,
        \[
        F_{\xi}(t) = J_{0}(2t) + 2t J_{-1}(2t),
        \]
        where $J_{l}(z) = \sum_{m \ge 0}{\frac{(-1)^{m}}{m! (m+l)!} \left(\frac{z}{2}\right)^{2m+l}}$ for $l \in \mathbb{N}$ and $J_{l}(z) = - J_{l}(z)$ for $l\in\mathbb{Z}^-$ gives the Bessel function of the first kind.
    \end{enumerate}
\end{prop}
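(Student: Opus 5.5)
The plan is to compute the Taylor coefficients of $F_{\xi}$ directly from the expansion in the proof of Theorem \ref{T: Xi-Kernel for H}:
\[
F_{\xi}(t)=\sum_{k\ge 0}c_k t^k,\qquad c_k=\sum_{m=0}^{k}\frac{(-1)^m (k+1)!\,\xi_m\overline{\xi_{k-m}}}{m!\,(k-m)!},
\]
and to evaluate $c_k$ for each of the two sequences. Since $F_{\xi}$ is entire (Theorem \ref{T: Xi-Kernel for H}), all rearrangements of these power series are justified.

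For part (1), take $\xi_n=\delta_{nN}$. The product $\xi_m\overline{\xi_{k-m}}$ is nonzero only when $m=N$ and $k-m=N$. Hence $c_k=0$ for $k\neq 2N$, and
\[
c_{2N}=\frac{(-1)^N(2N+1)!}{(N!)^2}.
\]
This is exactly the claimed monomial $F_{\xi}(t)=\frac{(-1)^N(2N+1)!}{(N!)^2}t^{2N}$.

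For part (2), take $\xi_n=1/n!$, so every term is real. The coefficient becomes
\[
c_k=(k+1)!\sum_{m=0}^k\frac{(-1)^m}{(m!)^2((k-m)!)^2}=(k+1)!\,\frac{1}{(k!)^2}\sum_{m=0}^k(-1)^m\binom{k}{m}^2=(k+1)!\,S_k .
\]
Lemma \ref{L: the S_k term} then gives $c_k=0$ for $k$ odd, and for $k=2j$,
\[
c_{2j}=\frac{(-1)^j(2j+1)!}{(j!)^2(2j)!}=\frac{(2j+1)(-1)^j}{(j!)^2}.
\]
Split this as $(2j+1)=1+2j$. The first piece gives $\sum_{j\ge0}\frac{(-1)^j}{(j!)^2}t^{2j}$, which equals $J_0(2t)$ by the series definition with $z/2=t$. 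The second piece is
\[
\sum_{j\ge1}\frac{2j(-1)^j}{(j!)^2}t^{2j}=2t\sum_{j\ge1}\frac{(-1)^j t^{2j-1}}{j!\,(j-1)!}.
\]
Reindexing with $j=m+1$ turns this into $-2t\sum_{m\ge0}\frac{(-1)^m t^{2m+1}}{m!\,(m+1)!}=-2tJ_1(2t)$. By the stated convention for negative indices, which should read $J_{-1}=-J_1$, this equals $2tJ_{-1}(2t)$. Adding the two pieces gives $F_{\xi}(t)=J_0(2t)+2tJ_{-1}(2t)$.

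The only delicate step is part (2): identifying $c_k$ with $(k+1)!\,S_k$ so that Lemma \ref{L: the S_k term} applies, and then keeping the signs and the index shift straight when matching the odd-order series with $J_{-1}$. The convention for negative indices in the statement has a typo and must be read as $J_{-1}=-J_1$. Everything else is a direct coefficient comparison.
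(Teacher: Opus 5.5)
Your proposal is correct and follows essentially the same route as the paper. Both arguments regroup $F_\xi$ by $k=m+n$, apply Lemma \ref{L: the S_k term} to get $c_{2j}=\frac{(-1)^j(2j+1)}{(j!)^2}$ (with $c_k=0$ for odd $k$), and split $2j+1=1+2j$ to identify $J_0(2t)$ and $2tJ_{-1}(2t)$. The only cosmetic difference is that you pass through $-2tJ_1(2t)$ and invoke $J_{-1}=-J_1$, which is the correct reading of the mistyped convention in the statement. The paper instead reads off $J_{-1}(2t)=\sum_{j\ge1}\frac{(-1)^j}{j!\,(j-1)!}t^{2j-1}$ directly.
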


\begin{proof}
    (1) Recall from Equation \ref{E: the entire function F}, 
        \[
        F_{\xi}(t) = \sum_{m,n \ge 0} \frac{(-1)^{m} (m+n+1)! \xi_{n} \ov{\xi_{m}}}{m! n!} t^{m+n}.
        \]
    Since $\xi_{n} = \delta_{nN}$, we get
    \[
    F_{\xi}(t) = \frac{(-1)^{N} (N+N+1)! \xi_{N} \ov{\xi_{N}}}{N! N!} t^{2N} = \frac{(-1)^{N}(2N+1)!}{(N!)^2}t^{2N}.
    \]

\medskip
    \no (2) Again from Equation \ref{E: the entire function F} with $\xi_{n} = \frac{1}{n!}$, we get
    \[
    F_{\xi}(t) = \sum_{m,n \ge 0} \frac{(-1)^{m} (m+n+1)!}{(m!)^2 (n!)^2} t^{m+n}.
    \]

    \no Set $k = m+n$, to write $F_{\xi}(t) = \sum_{k=0}^{+\infty} c_{k} t^{k}$, where for $k \in \mathbb{N}$,
    \[
    c_{k} = (k+1)!\sum_{m=0}^{k}\frac{(-1)^{m}}{(m!)^2 ((k-m)!)^2} = \frac{(k+1)!}{(k!)^2} \sum_{m=0}^{k} {(-1)^m \binom{k}{m}^2}.
    \]
    Using Lemma \ref{L: the S_k term}, we get 
    \[
    c_{k} =  \begin{cases} 
0, & \text{if } k \ \text{is odd} \\
\frac{(-1)^{j}(2j+1)}{(j!)^2}, & \text{if } k = 2j \ \text{is even} 
\end{cases}.
    \]

    \no Therefore, we get
    \[
    F_{\xi}(t) = \sum_{j = 0}^{\infty} \frac{(-1)^{j} (2j+1)}{(j!)^{2}} t^{2j} = 2t \sum_{j=0}^{\infty} \frac{(-1)^{j}}{j! (j-1)!} t^{2j-1} + \sum_{j=0}^{\infty} \frac{(-1)^{j}}{j! j!} t^{2j} = 2t J_{-1}(2t) + J_{0}(2t).
    \]
\end{proof}

\medskip

\no To study the zero set of the $\xi$-Bergman kernel of $\mathbb{H}$, we recall some properties of the Bessel functions $J_{l}$ and general entire functions.

\begin{defn} Let $f(z) = \sum_{n=0}^{\infty} a_{n} z^{n}$ be an entire function and $\{z_{1}, z_{2}, \ldots, z_{n}, \ldots\}$ be the set of zeroes of $f$ other than origin, such that 
    \[
    r_{1} \le r_{2} \le \ldots \le r_{n} \le \ldots,
    \]
    where $r_{i} = \abs{z_{i}}$.
\begin{enumerate}
    \item (\textbf{Order of an entire function},  \cite[page 9]{R. Boas}) The order of $f$, denoted by $\rho_{f}$, is defined as 
    \[
    \rho_{f} = \limsup_{n \to \infty} \frac{n \log{n}}{\log{\left(\frac{1}{\abs{a_{n}}}\right)}},
    \]
    if it is finite.

    \item (\textbf{Genus of zero set of an entire function}, \cite[page 14]{R. Boas}) The genus of the zero set of $f$ is defined as the non-negative integer $p$, such that $p+1$ is the smallest positive integer $\alpha$ for which the series $\sum_{n=1}^{\infty}{r_{n}^{-\alpha}}$ converges.

    \item (\textbf{Canonical product of the zero set}, \cite[page 18]{R. Boas}) The canonical product of the zero set of $f$ is defined as 
    \[
    P(z) = \prod_{n=1}^{\infty}{E\left(\frac{z}{z_{n}}, p\right)},
    \]
    where $p$ is the genus of the zero set of $f$, and $E(u,p) = (1-u) \exp{\left(u + \frac{u^2}{2} + \ldots + \frac{u^{p}}{p}\right)}$.
\end{enumerate}
    
\end{defn}

\begin{thm}[Hadamard's Factorisation Theorem, {\cite[page 22]{R. Boas}}] \label{T: HFT}
    If $f(z)$ is an entire function of order $\rho$ with an $m$-fold zero at the origin, we have
    \[
    f(z) =z^{m} \exp{(Q(z))} P(z),
    \]
    where $Q(z)$ is a polynomial of degree $q \le \rho$, and $P(z)$ is the canonical product (of genus $p$) formed with the zeroes of $f$ (other than the origin).
\end{thm}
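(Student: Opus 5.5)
We will prove Hadamard's Factorisation Theorem (Theorem \ref{T: HFT}) along the classical route. First we strip off the zero at the origin: set $g(z)=f(z)/z^{m}$, which is entire, has $g(0)\neq 0$, and has the same order $\rho$ and the same nonzero zeros $z_n$ as $f$.

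\textbf{Step 1: the canonical product converges.} Jensen's formula applied to $g$ gives $n(r)=O(r^{\rho+\epsilon})$ for the zero-counting function. Hence $\sum_n r_n^{-\alpha}<\infty$ for every $\alpha>\rho$. So the genus $p$ of the zero set is finite and satisfies $p\le\rho$. Using the elementary bound $|\log E(u,p)|\le C|u|^{p+1}$ for $|u|\le 1/2$, the product $P(z)=\prod_n E(z/z_n,p)$ converges locally uniformly to an entire function whose zeros are exactly the $z_n$, with multiplicity. Therefore $g/P$ is entire and zero-free, and we can write $g=e^{Q}P$ for some entire $Q$.

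\textbf{Step 2: $Q$ is a polynomial of degree $\le\rho$.} This is the main obstacle. We first show that the canonical product has order at most $\rho$, i.e.\ $\log|P(z)|\le C_\epsilon |z|^{\rho+\epsilon}$. To do this we split the product into the factors with $|z_n|\le 2|z|$ and those with $|z_n|>2|z|$. For the large zeros we use $|\log|E(u,p)||\le C|u|^{p+1}$. For the small zeros we use $\log|E(u,p)|\le C|u|^{p}$. Both sums are then controlled by the convergence exponent through the bound on $n(r)$.

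The harder direction is a \emph{lower} bound for $|P|$ away from its zeros. We will establish $\log|P(z)|\ge -|z|^{\rho+\epsilon}$ for $z$ outside the discs $|z-z_n|<r_n^{-(\rho+\epsilon)}$, whose radii have finite total sum. Consequently there is a sequence of circles $|z|=R_k\to\infty$ that avoid all these discs, and on them
\[
\operatorname{Re}Q(z)=\log|g(z)|-\log|P(z)|\le 2R_k^{\rho+\epsilon}.
\]

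\textbf{Step 3: conclude by Borel--Carathéodory.} Applying the Borel--Carathéodory inequality on the circles $|z|=R_k$, an upper bound on $\operatorname{Re}Q$ yields $|Q(z)|=O(R_k^{\rho+\epsilon})$ there. Cauchy's estimates then show that every Taylor coefficient of $Q$ of index greater than $\rho+\epsilon$ vanishes, since it is bounded by $O(R_k^{\rho+\epsilon-j})\to 0$. Letting $\epsilon\to 0$, we conclude that $Q$ is a polynomial of degree $q\le\rho$. Since this statement is quoted from Boas \cite{R. Boas}, in the paper itself we would simply cite it; the plan above is the standard proof we would follow if a self-contained argument were needed.
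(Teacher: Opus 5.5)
The paper gives no proof of this statement. It is quoted verbatim from Boas, so there is no internal argument to compare yours with, and your closing remark matches what the authors actually do. Judged as a self-contained argument, your outline is the standard minimum-modulus proof and is correct in substance.

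The one step you only announce, the lower bound for $\log|P|$, goes through with the splitting you already set up. Off the discs one has $|1-z/z_n|=|z_n-z|/r_n\ge r_n^{-(\rho+\epsilon)-1}$. So the factors with $r_n\le 2r$ contribute at least $-(\rho+\epsilon+1)\,n(2r)\log(2r)-C\,r^{p}\sum_{r_n\le 2r}r_n^{-p}$. This is $-O(r^{\rho+2\epsilon})$ because $p\le\rho$ and $\sum_n r_n^{-(\rho+\epsilon)}<\infty$. The factors with $r_n>2r$ are handled by your two-sided bound $|\log|E(u,p)||\le C|u|^{p+1}$, since $r^{p+1}\sum_{r_n>2r}r_n^{-p-1}=O(r^{\rho+\epsilon})$.

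A few points of precision:

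\textbf{The upper bound for $P$ is not needed.} The bound $\log|P(z)|\le C_\epsilon|z|^{\rho+\epsilon}$ is never used, because $\operatorname{Re}Q=\log|g|-\log|P|$ needs only an upper bound for $|g|$ and a lower bound for $|P|$. Moreover, the inequality $\log|E(u,p)|\le C|u|^{p}$ for $|u|\ge 1/2$ that you invoke there is false when $p=0$, where it reads $\log|1-u|\le C$. For $p=0$ use instead $\log|1-u|\le\log(1+|u|)$, which only costs a factor $\log r$.

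\textbf{Borel--Carath\'eodory.} Applied on $|z|=R_k$, it bounds $|Q|$ on a smaller circle such as $|z|=R_k/2$, not on $|z|=R_k$ itself. Cauchy's estimates on $|z|=R_k/2$ give the same conclusion.

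\textbf{Definition of order.} The paper defines the order through the Taylor coefficients, whereas you work with $\log M(r)=O(r^{\rho+\epsilon})$. You should invoke the classical equivalence of the two definitions for entire functions.

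\textbf{An alternative route.} Another classical proof differentiates the Poisson--Jensen formula $\lfloor\rho\rfloor+1$ times and lets the radius tend to infinity. This yields $Q^{(\lfloor\rho\rfloor+1)}\equiv 0$ directly and avoids exceptional discs altogether. Your route is longer but produces the minimum-modulus estimate for $P$ as a by-product.
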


\begin{defn}[Genus of an entire function, {\cite[page 22]{R. Boas}}]
The genus of an entire function $f(z)$ is defined to be \text{max}$(p,q)$, where $p,q$ are defined in the Hadamard's factorisation of $f$.   
\end{defn}

\begin{thm}[Laguerre's Theorem, {\cite[page 23]{R. Boas}}]\label{T: Laguerre}
    If $f(z)$ is a non-constant entire function, which is real for real $z$ and has only real zeroes, and is of genus 0 or 1, then the zeroes of $f'(z)$ are also real and are separated by the zeroes of $f(z)$.
\end{thm}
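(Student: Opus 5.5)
The natural route is the Hadamard factorisation (Theorem \ref{T: HFT}) followed by a study of the logarithmic derivative $f'/f$.

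\textbf{Step 1: normal form of $f$.} Since $f$ has genus $0$ or $1$, Theorem \ref{T: HFT} gives
\[
f(z)=z^{m}e^{b+az}\prod_{n}E\!\left(\frac{z}{z_n},p\right), \qquad p\in\{0,1\},
\]
with all $z_n$ real and nonzero, and $\sum_n z_n^{-2}<\infty$. I will show $a$ is real and $e^{b}$ is real. Because $f$ is real on $\mathbb{R}$, we have $f(z)=\overline{f(\bar z)}$. The zero set is real, so the product $P$ also satisfies $P(z)=\overline{P(\bar z)}$. Comparing the two expressions gives $e^{b+az}=e^{\bar b+\bar a z}$ for all $z$, hence $a=\bar a$ and $e^{b}\in\mathbb{R}$. (If $p=0$ the extra factors $e^{z/z_n}$ are simply absent; this only shifts $a$ by the real constant $\sum 1/z_n$.)

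\textbf{Step 2: logarithmic derivative.} Away from the zeros, term-by-term differentiation of $\log f$ gives
\[
\frac{f'(z)}{f(z)}=\frac{m}{z}+a+\sum_n\left(\frac{1}{z-z_n}+\frac{1}{z_n}\right).
\]
This series converges locally uniformly off the zero set, because its terms are $O(|z|/z_n^2)$ and $\sum z_n^{-2}<\infty$. Taking imaginary parts at a nonreal $z=x+iy$, and using that $a$ and the $z_n$ are real, gives
\[
\Im\frac{f'(z)}{f(z)}=-y\left(\frac{m}{|z|^2}+\sum_n\frac{1}{|z-z_n|^2}\right).
\]
This is nonzero whenever $y\neq0$, provided $f$ has at least one zero. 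If $f$ has no zeros, then $f=ce^{az}$, and $f'$ has no zeros at all when $a\neq 0$; the case $a=0$ is excluded since $f$ is nonconstant. Since $f(z)\neq 0$ off $\mathbb{R}$, it follows that $f'(z)\neq0$ off $\mathbb{R}$. So every zero of $f'$ is real.

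\textbf{Step 3: separation.} On $\mathbb{R}$ minus the zeros of $f$, differentiating once more gives
\[
\left(\frac{f'}{f}\right)'(x)=-\frac{m}{x^2}-\sum_n\frac{1}{(x-z_n)^2}<0,
\]
so $f'/f$ is strictly decreasing there. Between two consecutive distinct zeros $u<v$ of $f$, the function $f'/f$ tends to $+\infty$ as $x\to u^+$ and to $-\infty$ as $x\to v^-$, because of the simple poles with residue equal to the multiplicity. Hence $f'$ has exactly one zero in $(u,v)$. The remaining zeros of $f'$ are the multiple zeros of $f$, which are real, together with at most one zero of $f'/f$ on each unbounded component of $\mathbb{R}$ minus the zeros of $f$, again by monotonicity. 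This gives the separation (interlacing) statement.

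\textbf{Main obstacle.} The delicate points are justifying the term-by-term differentiation of the infinite product and getting the correct Weierstrass factors when $p=1$. These are exactly where the genus $\le1$ hypothesis enters: it makes $\sum_n\big(\frac{1}{z-z_n}+\frac{1}{z_n}\big)$ converge without any further exponential corrections. Without it, a term such as $z^2$ could appear in $Q$, and the sign computation in Step 2 would fail.
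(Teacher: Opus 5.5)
The paper gives no proof of this statement; it quotes it from Boas. Your argument is correct and is essentially the classical proof found there: Hadamard factorisation with real $a$ and real zeros, the logarithmic derivative $f'/f=\frac{m}{z}+a+\sum_n\bigl(\frac{1}{z-z_n}+\frac{1}{z_n}\bigr)$, the sign of $\Im(f'/f)$ off the real axis, and strict monotonicity of $f'/f$ between consecutive real zeros. You also handle the side issues correctly, namely local uniform convergence from $\sum_n z_n^{-2}<\infty$, the zero-free case $f=ce^{az}$, multiple zeros of $f$, and the unbounded gaps.
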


\begin{prop}\label{P: Bessel Function}
    Let $f(z) = z J_{0}(2z)$, where $J_{0}$ is the Bessel function of first kind. Then
    \begin{enumerate}
        \item all zeros of $f$ are real,
        \item the order of $f$ is 1,
        \item the genus of $f$ is 1.
    \end{enumerate}
\end{prop}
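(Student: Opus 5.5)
The approach is to work directly with the power series $f(z)=zJ_0(2z)=\sum_{m\ge 0}\frac{(-1)^m}{(m!)^2}z^{2m+1}$, read off the order and genus from it, and then obtain reality of the zeros from the classical theory of $J_0$. I would take the three items in the order (2), (1), (3), because the genus computation needs both the order and the location of the zeros.

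\emph{Step 1 (order).} The nonzero coefficients are $a_{2m+1}=\frac{(-1)^m}{(m!)^2}$. With $n=2m+1$, Stirling gives $\log\frac{1}{|a_n|}=2\log m!\sim 2m\log m\sim n\log n$. Hence
\[
\frac{n\log n}{\log(1/|a_n|)}\to 1,
\]
so $\rho_f=1$ by the coefficient formula for the order quoted in the definition above. As an independent check, the bound $|J_0(2z)|\le e^{2|\Im z|}\le e^{2|z|}$, which follows from the integral representation $J_0(w)=\frac1\pi\int_0^\pi\cos(w\sin\theta)\,d\theta$, shows that $f$ has order at most $1$.

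\emph{Step 2 (reality of zeros).} This is the main obstacle, since it is a genuine fact about Bessel functions rather than a formal computation. I would use the classical Lommel-type argument. Put $u(x)=J_0(\lambda x)$; it satisfies $(xu')'+\lambda^2xu=0$. Suppose $\lambda\ne0$ is a non-real zero of $J_0$. Because $J_0$ has real coefficients, $\bar\lambda$ is also a zero. Taking $u=J_0(\lambda x)$ and $v=J_0(\bar\lambda x)$ and integrating $x\,(u'v-uv')$ identities over $[0,1]$ gives
\[
(\lambda^2-\bar\lambda^2)\int_0^1 x\,|J_0(\lambda x)|^2\,dx=0 .
\]
The integral is strictly positive, so $\lambda^2=\bar\lambda^2$. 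Thus $\lambda$ is real or purely imaginary. Purely imaginary zeros are ruled out because $J_0(iy)=\sum\frac{(y/2)^{2m}}{(m!)^2}>0$. Consequently every zero of $J_0(2z)$, and hence of $f$ (whose only extra zero is $z=0$), is real. If the paper permits, I would simply cite Watson's treatise for this fact.

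\emph{Step 3 (genus).} The zeros of $f$ are $0$ together with $\pm j_k/2$, where $j_k$ are the positive zeros of $J_0$. Since the order is $1$, the convergence exponent is at most $1$, so $\sum r_n^{-2}<\infty$. The classical asymptotic $j_k\sim \pi k$ (McMahon) gives $\sum r_n^{-1}=\infty$. Therefore the zero-set genus is $p=1$. By Theorem~\ref{T: HFT}, $f(z)=z\,e^{Q(z)}P(z)$ with $\deg Q\le 1$. Since $f$ is odd and the zeros are symmetric, the degree-one terms of $P$ cancel in pairs, and $P(z)=\prod_k\bigl(1-\frac{4z^2}{j_k^2}\bigr)$. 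Comparing with $f(0)/z\to1$ and evenness forces $Q\equiv0$. Hence the genus is $\max(p,q)=\max(1,0)=1$.

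If one wants to avoid the asymptotic for $j_k$, note that genus $0$ would require $\sum 1/j_k<\infty$. This can be refuted directly from the product representation together with the growth $|J_0(iy)|\sim e^{y}/\sqrt{2\pi y}$, since a genus-$0$ product of real zeros has growth of order strictly less than exponential type along the imaginary axis.
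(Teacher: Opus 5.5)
Your proof is correct. Your Step 1 (reading the coefficients $a_{2j+1}=(-1)^j/(j!)^2$ and applying Stirling) is exactly the paper's computation of the order. For items (1) and (3), however, the paper takes a shorter route that relies on citations:
\begin{itemize}
\item it quotes Watson for the reality of the zeros of $J_0$;
\item for the genus, it quotes Watson's product formula $J_0(2z)=\prod_n\left(1-\frac{4z^2}{j_n^2}\right)$, splits each factor as $E(2z/j_n,1)\,E(-2z/j_n,1)$, and reads off $m=1$, $q=0$, $p=1$.
\end{itemize}

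Your route differs in three ways.
\begin{itemize}
\item Your Lommel-integral argument proves item (1) directly instead of citing it.
\item Your evenness argument ($J_0(2z)$ and $P$ are both even, so $e^{Q}$ is even, so $Q$ is constant) recovers the product formula from Hadamard's theorem rather than quoting it.
\item More substantively, you actually justify $p=1$. The paper's displayed factorization does not by itself distinguish $p=1$ from $p=0$, because the exponential factors cancel in pairs. What determines $p$ is whether $\sum_n 1/j_n$ diverges, and you supply this via McMahon's asymptotic $j_k\sim\pi k$.
\end{itemize}

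If you prefer the growth-based alternative, state it precisely. Under $p=0$, the product has minimal exponential type, i.e.\ $\log|P(iy)|=o(y)$. Evenness forces the linear part of $Q$ to vanish. Together these contradict $J_0(2iy)=I_0(2y)\sim e^{2y}/\sqrt{4\pi y}$; note the argument here is $2iy$, not $iy$. Also, in Step 3, ``$f(0)/z\to 1$'' should read $f(z)/z\to 1$.

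None of these refinements affects the paper's later use of the proposition. Laguerre's theorem only needs genus $0$ or $1$, and that already follows from $\sum r_n^{-2}<\infty$ together with $\deg Q\le 1$.
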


\begin{proof}
    \no (1) In \cite[page 482]{Watson}, it is mentioned that all zeros of $J_{0}(z)$ are real, which in turn, tells us that all zeros of $f(z) = z J_{0}(2z)$ are real as well. Since, $J_{0}$ is an even function, the zeros of $f$ come in pairs, that is, if $z_{0}$ is a zero of $f$, then $-z_{0}$ will also be a zero. Let $\{j_{1}, j_{2}, \ldots \}$ be the set of positive zeros of $J_{0}(z)$, such that 
    $
    j_{1} \le j_{2} \le \ldots,
    $
    then the zero set of $J_{0}$ will be $\{j_{1}, -j_{1}, j_{2}, -j_{2}, \ldots\}$. Therefore, the zero set of $f(z)$ will be $\{0, \frac{j_{1}}{2}, \frac{-j_{1}}{2}, \frac{j_{2}}{2}, \frac{-j_{2}}{2}, \ldots\}$. 

\medskip
\no (2) Note that $f(z) = \sum_{j=0}^{\infty}{\frac{(-1)^{j}}{(j!)^{2}}z^{2j+1}}$, therefore
\[
a_{n} = \begin{cases} 
0, & \text{if } n \neq 2j+1 \\
\frac{(-1)^{j}}{(j!)^2}, & \text{if } n = 2j+1 
\end{cases}.
\]

\no For $n=2j+1$,
\[
\frac{n \log{n}}{\log{\left(\frac{1}{\abs{a_{n}}}\right)}} = \frac{(2j+1)\log{(2j+1)}}{2 \log{j!}}.
\]

\no Stirling's estimate tells us that $\log{j!} \sim j \log{j} -j$, which gives the order of $f$
\[
\rho_{f} = \limsup_{n \to \infty} \frac{n \log{n}}{\log{\left(\frac{1}{\abs{a_{n}}}\right)}} = \lim_{j \to \infty} \frac{(2j+1)\log{(2j+1)}}{2 \log{j!}} = 1.
\]

\medskip
\no (3) From \cite[page 502]{Watson}, we can write
    \begin{eqnarray*}
        f(z) &=& zJ_{0}(2z) = z\prod_{n=1}^{\infty}{\left(1- \frac{z^2}{\left(\frac{j_{n}}{2}\right)^2} \right)} 
        \\
        &=&
        z\prod_{n=1}^{\infty}{\left(1- \frac{z}{\left(\frac{j_{n}}{2}\right)} \right) \exp{\left(\frac{2z}{j_{n}}\right)} \left(1- \frac{z}{\left(\frac{-j_{n}}{2}\right)} \right)} \exp{\left(\frac{-2z}{j_{n}}\right)}.
    \end{eqnarray*}
From Hadamard's Factorisation Theorem \ref{T: HFT}, we get $m = 1$, $q = 0$, and $p = 1$. Thus, the genus of $f$ is max$\{p,q\} = 1$.
\end{proof}

\medskip
\no Now we are ready to study the zero set of the $\xi$-Bergman kernel of $\mathbb{H}$. 

\begin{thm}
    The upper half plane $\mathbb{H} = \{z\in\mathbb C:\Im z>0\}$ is a $\xi$-Lu Qi-Keng domain for
    \begin{enumerate}
        \item $\xi = \{\delta_{nN}\}_{n \ge 0}$, for some $N \in \mathbb{N}$.
        \item $\xi = \{\frac{1}{n!}\}_{n \ge 0}$.
    \end{enumerate}
\end{thm}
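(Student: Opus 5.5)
The plan is to reduce the statement, via Theorem \ref{T: Xi-Kernel for H}, to a zero-free question for the entire function $F_{\xi}$ on the lower half-plane. For $z,w\in\mathbb{H}$ we have $z-\ov w\neq 0$, so the prefactor $\frac{-1}{\pi(z-\ov w)^2}$ never vanishes, and
\[
B_{\xi}(z,w)=\frac{-1}{\pi (z-\ov{w})^{2}} F_{\xi}\!\left(\frac{1}{z-\ov{w}}\right).
\]
As noted after the definition of $\xi$-Lu Qi-Keng domains, $t=\frac{1}{z-\ov w}$ ranges over $\{\Im t<0\}$. So it suffices to show that $F_{\xi}$ has no zeros there. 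In fact, I will show that every zero of $F_{\xi}$ is real.

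For case (1), Proposition \ref{P: F-xi}(1) gives
\[
F_{\xi}(t)=\frac{(-1)^N(2N+1)!}{(N!)^2}\,t^{2N}.
\]
Its only zero is $t=0$, which is real. Hence $B_{\xi}$ does not vanish on $\mathbb{H}\times\mathbb{H}$.

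For case (2), the key observation is that $F_{\xi}$ is the derivative of $f(t)=tJ_0(2t)$, the function of Proposition \ref{P: Bessel Function}. Using the coefficients computed in the proof of Proposition \ref{P: F-xi}(2),
\[
f(t)=\sum_{j\ge0}\frac{(-1)^j}{(j!)^2}t^{2j+1},
\qquad
f'(t)=\sum_{j\ge0}\frac{(-1)^j(2j+1)}{(j!)^2}t^{2j}=F_{\xi}(t).
\]
The derivative may be taken term by term since the series is entire. This identity can also be checked from $J_0'=-J_1=J_{-1}$ with the paper's sign convention for $J_{-1}$. I would verify it at the coefficient level to avoid depending on that convention.

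Next I apply Laguerre's Theorem \ref{T: Laguerre} to $f$. Its hypotheses hold as follows.
\begin{enumerate}
\item $f$ is non-constant.
\item $f$ is real for real $t$, since its Taylor coefficients are real.
\item All zeros of $f$ are real, by Proposition \ref{P: Bessel Function}(1).
\item $f$ has genus $1$, by Proposition \ref{P: Bessel Function}(3).
\end{enumerate}
Laguerre's theorem then says that all zeros of $f'=F_{\xi}$ are real. In particular, $F_{\xi}$ has no zero with $\Im t<0$, so $B_{\xi}(z,w)\neq 0$ on $\mathbb{H}\times\mathbb{H}$.

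The only step needing real care is recognizing $F_{\xi}=(tJ_0(2t))'$ and checking it against the paper's sign convention for $J_{-1}$. Once that identity is in place, the rest follows directly from the cited results.
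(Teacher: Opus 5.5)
Your proposal is correct and follows the paper's proof essentially step for step. Like the paper, you reduce to showing that $F_{\xi}$ has no zeros in $\{\Im t<0\}$, read off case (1) from the monomial formula, and for case (2) identify $F_{\xi}=(tJ_0(2t))'$ and apply Laguerre's theorem using Proposition \ref{P: Bessel Function}. Verifying that identity at the level of Taylor coefficients is a sensible small refinement, since the paper's stated convention for negative-index Bessel functions is garbled.
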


\begin{proof}
    \no As we have noted before, we only need to show that the corresponding entire function $F_{\xi}$ has no zeros in the lower half. Theorem \ref{T: Xi-Kernel for H} then implies that $\mathbb{H}$ will be $\xi$-Lu Qi-Keng domain.

\medskip
    \no (1) For $\xi = \{\delta_{nN}\}_{n \ge 0}$, Proposition \ref{P: F-xi} gives us $F_{\xi}(t) = \frac{(-1)^{N} (2N+1)!}{(N!)^{2}} t^{2N}$. It is clear that the origin is the only zero of $F_{\xi}$, and therefore $\mathbb{H}$ is a $\xi$-Lu Qi-Keng domain for $\xi = \{\delta_{nN}\}_{n \ge 0}$, for $N \in \mathbb{N}$.

    \medskip
    \no (2) We need to prove that the entire function $F_{\xi}(t) = J_{0}(2t) + 2t J_{-1}(2t)$ does not have zeros in the lower half plane $\mathbb{C} \setminus \ov{\mathbb{H}}$. We will prove something stronger, that is, we show that all the zeros of $F_{\xi}(t)$ are real. To see that, we observe that
    \[
    \frac{d}{dt}\left(t J_{0}(2t)\right) = J_{0}(2t) + t \frac{d}{dt}(J_{0}(2t)),
    \]
    and $\frac{d}{dt}(J_{0}(2t)) = \frac{d}{dt}\left(\sum_{m=0}^{\infty}{\frac{(-1)^{m}}{(m!)^{2}} t^{2m}}\right) = \left(\sum_{m=1}^{\infty}{\frac{(-1)^{m} 2m}{m! m!} t^{2m -1}}\right) = 2\left(\sum_{m=1}^{\infty}{\frac{(-1)^{m}}{m! (m-1)!} t^{2m-1}}\right) = 2 J_{-1}(2t)$. This in turn gives 
    \begin{equation}
    F_{\xi}(t) = \frac{d}{dt}\left(t J_{0}(2t)\right).
    \end{equation}

    \no Now note that $f(t) = t J_{0}(2t)$ is a non-constant entire function, and $f(t) \in \mathbb{R}$ for $t \in \mathbb{R}$ (this is due to the fact that both $t$ and $J_{0}(2t)$ satisfy this property).  The Proposition \ref{P: Bessel Function} tells us that $f(t) = t J_{0}(2t)$ has only real zeros, and is of genus $1$. Therefore, we can apply Laguerre's Theorem \ref{T: Laguerre} to $f(t) = t J_{0}(2t)$, and conclude that $F_{\xi}(t)$ has only real zeros. Thus, $\mathbb{H}$ is a $\xi$-Lu Qi-Keng domain for $\xi = \{\frac{1}{n!}\}_{n \ge 0}$.
\end{proof}


\section{Boundary Behaviour}

In this section, we study the asymptotic behaviour of the $\xi$-Bergman kernel on the diagonal $B_{\xi}(z,\ov z)$, for the upper half-plane $\mathbb{H}$, as the point $z$ approaches the boundary $\partial\mathbb H=\mathbb R$. For $z=x+iy\in\mathbb H$, $z- \overline{z} = 2iy$, and
\[
t=\frac{1}{z-\ov z}
=-\frac{i}{2y}.
\]
Thus, approaching the boundary corresponds to $t \longrightarrow -i\infty$.

\medskip

\noindent We investigate these two limits for the special choices of $\xi$ considered in the section.

\subsection{The case $\xi_n=\delta_{nN}$}

Recall that 
\[
B_{\delta_{nN}}(z,w) = (-1)^{N+1}\frac{(2N+1)!}{\pi(N!)^2} \frac{1}{(z-\ov w)^{2N+2}}
\]
Therefore, we get
\[
B_{\delta_{nN}}(z,\ov z) = (-1)^{N+1}\frac{(2N+1)!}{\pi(N!)^2} \frac{1}{(2iy)^{2N+2}} = \frac{(2N+1)!}{\pi(N!)^2 (2y)^{2N+2}}.
\]

\subsection{The case $\xi_n=\frac1{n!}$}

Recall from the previous section that

\[
B_{\xi}(z,w) = -\frac{t^2}{\pi} \left(J_{0}(2t) + 2tJ_{-1}(2t)\right) = -\frac{t^2}{\pi} \left(J_{0}(2t) - 2tJ_{1}(2t)\right),
\]
where $t = \frac1{z-\ov w}$. Along the diagonal, $t = -\frac{i}{2y}$, and therefore

\[
B_{\xi}(z,\overline{z}) = -\frac1{\pi} \left( -\frac{i}{2y} \right)^2 \left[ J_{0}\left(-\frac{i}{y}\right) + \frac{i}{y} J_{1}\left(-\frac{i}{y}\right) \right].
\]

\noindent Recall that for $\nu \in \mathbb{Z}$, the Bessel function satisfies $J_{\nu}(iz) = i^{\nu} I_{\nu}(z)$, where $I_{\nu}$ is the modified Bessel function. Therefore $J_{0}(-ix)=I_0(x), \ J_{1}(-ix)=-i I_{1}(x)$, and we obtain

\[
B_{\xi}(z,\overline{z}) = \frac{1}{4\pi y^2} \left( I_{0} \left(\frac{1}{y}\right) + \frac{1}{y} I_{1}\left(\frac{1}{y}\right) \right).
\]

\noindent Note that, as $y\to0^+$,

\[
I_{\nu}(x) \sim \frac{e^x}{\sqrt{2\pi x}}, \qquad x\to\infty.
\]

\noindent Since $\frac{1}{y}I_{1}\left(\frac{1}{y}\right)$ dominates $I_{0}\left(\frac{1}{y}\right)$, as $y \to 0^{+}$, it follows that

\[
B_{\xi}(z,\overline{z}) \sim \frac{1}{4\pi y^2} \cdot \frac{e^{1/y}}{\sqrt{2\pi}} \frac{1}{\sqrt y}.
\]

\noindent Hence, we get

\[
B_{\xi}(z,\overline{z}) \sim \frac{e^{1/\text{dist}(z, \partial \mathbb{H})}} {4\sqrt{2}\,\pi^{3/2}} \text{dist}(z, \partial \mathbb{H})^{-5/2}.
\]

\medskip

\no Thus, the kernel possesses an exponentially stronger boundary singularity than in the finitely supported case. We summarize the above discussion as follows.

\begin{thm}
For the upper half-plane $\mathbb{H}$, the $\xi$-Bergman kernel satisfies the following asymptotic behaviour.

\begin{enumerate}
\item If $\xi_n=\delta_{nN}$, then
\begin{equation}
B_{\xi}(z,\overline{z}) \sim \frac{(2N+1)!}{\pi (N!)^2} \text{dist}(z, \partial \mathbb{H})^{-2N-2}.
\end{equation}

\item If $\xi_n=\frac1{n!}$, then
\begin{equation}
B_{\xi}(z,\overline{z}) \sim \frac{e^{1/\text{dist}(z, \partial \mathbb{H})}} {4\sqrt{2}\,\pi^{3/2}} \text{dist}(z, \partial \mathbb{H})^{-5/2}.
\end{equation}
\end{enumerate}

\noindent Consequently, finite-support sequences $\xi_n = \delta_{nN}$ produce polynomial boundary singularities, whereas the infinite-support sequence $\xi_n=\frac1{n!}$ gives rise to an exponential boundary singularity.
\end{thm}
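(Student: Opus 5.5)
The plan is to evaluate the closed forms already obtained on the diagonal and then apply standard asymptotics. Here ``$B_\xi(z,\overline z)$'' means the diagonal value $B_\xi(z,z)$, for which $t=1/(z-\overline z)=-i/(2y)$. Also, for $z=x+iy\in\mathbb H$ we have $\text{dist}(z,\partial\mathbb H)=y$. Both parts start from Theorem \ref{T: Xi-Kernel for H}, which gives
\[
B_\xi(z,z)=-\frac{t^2}{\pi}F_\xi(t),
\]
together with the explicit $F_\xi$ from Proposition \ref{P: F-xi}.

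For part (1), substitute $F_\xi(t)=\frac{(-1)^N(2N+1)!}{(N!)^2}t^{2N}$. This yields
\[
B_\xi(z,z)=\frac{(-1)^{N+1}(2N+1)!}{\pi (N!)^2}\,t^{2N+2},
\qquad
t^{2N+2}=\left(-\frac{i}{2y}\right)^{2N+2}=\frac{(-1)^{N+1}}{(2y)^{2N+2}}.
\]
The signs cancel, and we obtain the exact identity
\[
B_\xi(z,z)=\frac{(2N+1)!}{\pi(N!)^2(2y)^{2N+2}}.
\]
So $B_\xi(z,z)$ is exactly a constant times $y^{-2N-2}$, and ``$\sim$'' holds trivially. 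One point needs checking: the constant displayed in the statement omits the factor $2^{-2N-2}$. I would either read the statement's $\sim$ as ``comparable up to constants'' or correct the constant to $\frac{(2N+1)!}{\pi (N!)^2 2^{2N+2}}$. My proof would record the exact formula.

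For part (2), use $F_\xi(t)=J_0(2t)+2tJ_{-1}(2t)=J_0(2t)-2tJ_1(2t)$. I would verify this directly from the series $\sum_j (-1)^j(2j+1)t^{2j}/(j!)^2$, rather than relying on the paper's convention for $J_{-1}$. At $t=-i/(2y)$ we have $2t=-i/y$. Applying $J_\nu(iw)=i^\nu I_\nu(w)$ together with the parity of $J_0$ and $J_1$ gives $J_0(-i/y)=I_0(1/y)$ and $J_1(-i/y)=-iI_1(1/y)$. Hence
\[
B_\xi(z,z)=\frac{1}{4\pi y^2}\Bigl(I_0(1/y)+\tfrac1y I_1(1/y)\Bigr).
\]
As a consistency check, every coefficient of the series in $1/y$ is positive, as positive definiteness demands.

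Next insert the large-argument asymptotic $I_\nu(x)=\frac{e^x}{\sqrt{2\pi x}}(1+O(1/x))$ with $x=1/y$. The $I_0$ term is of order $e^{x}x^{-1/2}$, while $xI_1(x)$ is of order $e^x x^{1/2}$, so the second term dominates by a factor of $x$. This gives
\[
B_\xi(z,z)\sim \frac{x^2}{4\pi}\cdot\frac{x\,e^x}{\sqrt{2\pi x}}=\frac{e^{1/y}}{4\sqrt2\,\pi^{3/2}}\,y^{-5/2}.
\]
The main obstacle is the sign and branch bookkeeping in $J_\nu(-ix)$. Doing this directly from the power series avoids the issue: substituting $t=-i/(2y)$ into $\sum_j (-1)^j(2j+1)t^{2j}/(j!)^2$ gives $\sum_j (2j+1)(2y)^{-2j}/(j!)^2$, which matches the $I$-series term by term. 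The Bessel asymptotic itself I would cite, for example from Watson.
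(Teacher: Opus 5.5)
Your proposal is correct and follows the paper's argument essentially step for step: you evaluate the closed forms from Theorem \ref{T: Xi-Kernel for H} and Proposition \ref{P: F-xi} at $t=-i/(2y)$, convert $J_0,J_1$ to $I_0,I_1$, and apply $I_\nu(x)\sim e^x/\sqrt{2\pi x}$, with the $x I_1(x)$ term dominating. You are also right about part (1): the paper's own computation gives $B_\xi(z,z)=\frac{(2N+1)!}{\pi(N!)^2(2y)^{2N+2}}$, so the constant displayed in the statement is missing the factor $2^{-2N-2}$; this already shows at $N=0$, where $B_\xi$ is the Bergman kernel $1/(4\pi y^2)$ rather than $1/(\pi y^2)$.
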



\section{Transformation Formulae}

Let $\Om_1,\Om_2\subset \mathbb{C}^n$ be domains and $F:\Om_1\rightarrow \Om_2$ be a biholomorphism. We write $J_F(z):=\det F'(z)$ for the complex Jacobian determinant of $F$. The biholomorphism $F$ induces the unitary map $U_{F}: A^2(\Om_2)\rightarrow A^2(\Om_1)$ defined by
\[
U_F(g)(z) := J_F(z) (g\circ F)(z),
\qquad
z\in \Om_1.
\]
Fix $\xi\in \ell^1$. For $\zeta\in \Om_1$, we define a linear functional $L_{\xi, \zeta}: A^2(\Om_2)\rightarrow \mathbb{C}$ by
\begin{equation}
L_{\xi, \zeta}(g) := 
(\xi\cdot U_F(g))(\zeta).
\end{equation}
Recall that $\Gamma_{\xi,\zeta}: A^2(\Om_1)\ni h\mapsto (\xi\cdot h)(\zeta)\in\mathbb{C}$ is a bounded linear functional. Since $U_F$ is unitary, we have
\[
\vert (\xi\cdot U_F(g))(\zeta) \vert 
\leq
\Vert \Gamma_{\xi,\zeta}\Vert \, \Vert U_F(g)\Vert_{A^2(\Om_1)} 
=
\Vert \Gamma_{\xi,\zeta}\Vert \, \Vert g\Vert_{A^2(\Om_2)}.
\]
So, the functional $L_{\xi,\zeta}$ is bounded. 

\begin{defn}
Given a biholomorphism $F:\Omega_{1}\to\Omega_{2}$ between domains $\Om_1, \Om_2\subset\mathbb{C}^n$, fix $\xi\in\ell^{1}$. For each $\zeta\in\Omega_{1}$, define
\begin{equation}
\left(\xi_{\zeta}^F\right)_{\alpha} := 
\sum_{\beta\in\mathbb N^n}
\frac{\xi_\beta}{\beta!}
\left.\partial_t^\beta
\Big[
J_F(\zeta+t)\,(F(\zeta+t)-F(\zeta))^\alpha
\Big]
\right|_{t=0},
\end{equation}
for all $\alpha\in\mathbb{N}^n$.
\end{defn}

\begin{prop}
We have $\xi_{\zeta}^F := (\xi_{\zeta}^F)_{\alpha} \in \ell^1$ and 
\begin{equation}
    L_{\xi,\zeta}(g) = (\xi^F_{\zeta} \cdot g)(F(\zeta))
\end{equation}
for every $g\in A^2(\Om_2)$.
\end{prop}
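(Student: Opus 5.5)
The plan is to expand $g$ in a Taylor series about $F(\zeta)$, push the differential operator $\xi$ through the composition $J_F\cdot(g\circ F)$, and identify the resulting coefficients with $(\xi^F_\zeta)_\alpha$. The interchanges of summation are controlled by Cauchy estimates, exactly as in Propositions \ref{rxi_0} and \ref{rxi}.

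\textbf{Step 1: the sequence $\xi^F_\zeta$ lies in $\ell^1$.} Fix $\zeta\in\Om_1$ and choose $r>0$ with $\overline{P(\zeta,r)}\Subset\Om_1$. Put
\[
M:=\sup_{|t_j|\le r}|J_F(\zeta+t)|,\qquad D:=\sup_{|t_j|\le r}\max_j|F_j(\zeta+t)-F_j(\zeta)|.
\]
For each $\alpha$, the function $h_\alpha(t):=J_F(\zeta+t)(F(\zeta+t)-F(\zeta))^\alpha$ is holomorphic on the closed polydisc and is bounded by $MD^{|\alpha|}$. By the Cauchy estimates, $|\partial_t^\beta h_\alpha(0)|\le MD^{|\alpha|}\beta!\,r^{-|\beta|}$, so
\[
|(\xi^F_\zeta)_\alpha|\le M D^{|\alpha|}\sum_\beta|\xi_\beta|r^{-|\beta|}=:C D^{|\alpha|}.
\]
This bound only decays geometrically, whereas $\ell^1$ requires $\sum_\alpha|(\xi^F_\zeta)_\alpha|\rho^{|\alpha|}<\infty$ for \emph{every} $\rho$. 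That is the main obstacle.

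The fix uses the fact that $F(\zeta+t)-F(\zeta)$ vanishes at $t=0$. Each factor is $O(|t|)$, so $h_\alpha$ vanishes to order $|\alpha|$ at $0$, and $\partial^\beta_t h_\alpha(0)=0$ whenever $|\beta|<|\alpha|$. Given $\rho>0$, shrink the polyradius to $s\le r$ so that $D_s:=\sup_{|t_j|\le s}\max_j|F_j(\zeta+t)-F_j(\zeta)|\le Ls$, with $L$ a Lipschitz constant of $F$ near $\zeta$. The Cauchy estimates on $P(0,s)$, restricted to $|\beta|\ge|\alpha|$, give
\[
|(\xi^F_\zeta)_\alpha|\le M (Ls)^{|\alpha|}\sum_{|\beta|\ge|\alpha|}|\xi_\beta|s^{-|\beta|}.
\]
Multiplying by $\rho^{|\alpha|}$ and summing over $\alpha$, then exchanging the order of summation, yields
\[
\sum_\alpha|(\xi^F_\zeta)_\alpha|\rho^{|\alpha|}\le M\sum_\beta|\xi_\beta|s^{-|\beta|}\sum_{|\alpha|\le|\beta|}(L\rho s)^{|\alpha|}.
\]
The number of $\alpha$ with $|\alpha|=k$ is at most $(k+1)^{n-1}$, so the inner sum is bounded by a polynomial in $|\beta|$ times $\max(1,L\rho s)^{|\beta|}$. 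Choosing $s$ small (say $L\rho s\le 1$) reduces the total to $\sum_\beta|\xi_\beta|\,\mathrm{poly}(|\beta|)\,s^{-|\beta|}$. This is finite because $\xi\in\ell^1$, after absorbing the polynomial factor into a slightly smaller radius.

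\textbf{Step 2: the identity.} Let $g\in A^2(\Om_2)$ and $w_0=F(\zeta)$. On a small polydisc around $w_0$,
\[
g(w)=\sum_\alpha \frac{g^{(\alpha)}(w_0)}{\alpha!}(w-w_0)^\alpha .
\]
For $t$ small this gives
\[
U_F(g)(\zeta+t)=\sum_\alpha \frac{g^{(\alpha)}(w_0)}{\alpha!}\,h_\alpha(t),
\]
and the series converges uniformly on a small polydisc in $t$, since $|g^{(\alpha)}(w_0)|/\alpha!\le C'R^{-|\alpha|}$ and $|h_\alpha|\le M D_s^{|\alpha|}$ with $D_s<R$. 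Derivatives therefore pass termwise:
\[
\partial^\beta_t U_F(g)(\zeta)=\sum_\alpha \frac{g^{(\alpha)}(w_0)}{\alpha!}\,\partial^\beta h_\alpha(0).
\]
Multiplying by $\xi_\beta/\beta!$ and summing over $\beta$, then exchanging the $\alpha$ and $\beta$ sums, gives
\[
L_{\xi,\zeta}(g)=\sum_\alpha \frac{g^{(\alpha)}(w_0)}{\alpha!}(\xi^F_\zeta)_\alpha=(\xi^F_\zeta\cdot g)(F(\zeta)).
\]
The exchange of sums is justified by absolute convergence. The double sum of moduli is bounded by $\sum_\beta|\xi_\beta|s^{-|\beta|}\sum_{|\alpha|\le|\beta|}C'M(Ls/R)^{|\alpha|}$, and this is finite by the same estimate as in Step 1.
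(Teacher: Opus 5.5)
Your proposal is correct and follows essentially the same route as the paper: Cauchy estimates for $h_\alpha(t)=J_F(\zeta+t)(F(\zeta+t)-F(\zeta))^\alpha$ on a polydisc whose radius depends on $\rho$, then Taylor-expanding $g$ about $F(\zeta)$ and differentiating termwise; you also justify the interchanges of sums and derivatives, which the paper does without comment. The Step 1 ``obstacle'' is not real: your first bound $|(\xi^F_\zeta)_\alpha|\le CD^{|\alpha|}$ already suffices once $r$ is chosen small enough (by continuity of $F$) that $D\rho<1$, because $C=M\sum_\beta|\xi_\beta|r^{-|\beta|}$ is finite for every $r>0$ when $\xi\in\ell^1$. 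This is exactly what the paper does, so your vanishing-order and Lipschitz refinement is unnecessary.
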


\begin{proof}
We will first show that $\xi^F_{\zeta}\in \ell^1$. For $t\in\mathbb{C}^n$ near $0$, set
\[
\Phi_\zeta(t):=F(\zeta+t)-F(\zeta).
\]
Fix $\rho>0$. We need to prove that
\[
\sum_{\alpha\in\mathbb{N}^{n}}
\left|\left(\xi^F_{\zeta}\right)_{\alpha}\right|\,\rho^{|\alpha|}
<\infty.
\]
Since $\Phi_{\zeta}(0)=0$, by continuity there exists $r>0$
such that the closed polydisc 
$\overline{P(0,r)}
:=
\left\{
t\in\mathbb{C}^{n}:
|t_{j}|\leq r,\ 1\leq j\leq n
\right\}$
satisfies $\zeta+\overline{P(0,r)}\Subset\Omega_{1}$
and
\[
q
:=
\max_{1\leq j\leq n}
\sup_{t\in P(0,r)}
|\Phi_{\zeta,j}(t)|
<
\frac{1}{\rho}.
\]
Also set
\[
M
:=
\sup_{t\in P(0,r)}
|J_{F}(\zeta+t)|.
\]
Then $M<\infty$. For each $\alpha\in\mathbb{N}^{n}$, define
$
h_{\alpha}(t)
:=
J_{F}(\zeta+t)\Phi_{\zeta}(t)^{\alpha}.
$
For $t\in P(0,r)$, we have
\[
|h_{\alpha}(t)|
\leq
M\prod_{j=1}^{n}
|\Phi_{\zeta,j}(t)|^{\alpha_{j}}
\leq
Mq^{|\alpha|}.
\]
Therefore, by the Cauchy estimates on the polydisc $P(0,r)$,
\[
\frac{1}{\beta!}
\left|
\partial_{t}^{\beta}h_{\alpha}(0)
\right|
\leq
Mq^{|\alpha|} \,r^{-|\beta|}
\]
for every $\alpha,\beta\in\mathbb{N}^{n}$. Consequently,
\begin{eqnarray*}
\sum_{\alpha\in\mathbb{N}^{n}}
\left|\left(\xi^F_{\zeta}\right)_{\alpha}\right|\,\rho^{|\alpha|}
&\leq&
\sum_{\alpha\in\mathbb{N}^{n}}
\sum_{\beta\in\mathbb{N}^{n}}
\frac{|\xi_{\beta}|}{\beta!}
\left|
\partial_{t}^{\beta}h_{\alpha}(0)
\right| \,\rho^{|\alpha|}
\\
&\leq&
M\sum_{\alpha\in\mathbb{N}^{n}}
(\rho q)^{|\alpha|}
\sum_{\beta\in\mathbb{N}^{n}}
|\xi_{\beta}|r^{-|\beta|}
=
M 
\prod_{j=1}^{n}
\left(
\sum_{k=0}^{\infty}(\rho q)^{k}
\right)
\sum_{\beta\in\mathbb{N}^{n}}
|\xi_{\beta}|r^{-|\beta|}
\\&=&
M
\frac{1}{(1-\rho q)^{n}}
\sum_{\beta\in\mathbb{N}^{n}}
|\xi_{\beta}|r^{-|\beta|}
<
\infty,
\end{eqnarray*}
because $\xi\in \ell^1$. Therefore, $\xi^F_{\zeta}\in \ell^1$. Let $g\in A^2(\Om_2)$. The power series expansion of $g$ around $F(\zeta)$ gives
\[
g(F(\zeta+t))
=
\sum_{\alpha\in\mathbb{N}^{n}}
\frac{g^{(\alpha)}(F(\zeta))}{\alpha!}
(F(\zeta+t)-F(\zeta))^{\alpha}
=
\sum_{\alpha\in\mathbb{N}^{n}}
\frac{g^{(\alpha)}(F(\zeta))}{\alpha!}
\Phi_{\zeta}(t)^{\alpha}.
\]
Now,
\[
L_{\xi,\zeta}(g)
=
(\xi\cdot U_{F}(g))(\zeta)
=
\sum_{\beta\in\mathbb{N}^{n}}
\frac{\xi_{\beta}}{\beta!}
\left.
\partial_{t}^{\beta}
\bigl[U_{F}(g)(\zeta+t)\bigr]
\right|_{t=0}.
\]
Since $U_{F}(g)(\zeta+t) = J_{F}(\zeta+t)\,g(F(\zeta+t))$, we have the power series expansion
\[
U_{F}(g)(\zeta+t)
=
J_{F}(\zeta+t)
\sum_{\alpha\in\mathbb{N}^{n}}
\frac{g^{(\alpha)}(F(\zeta))}{\alpha!}
\Phi_{\zeta}(t)^{\alpha}.
\]
Substituting the expression of $U_{F}(g)(\zeta+t)$ gives
\begin{eqnarray*}
L_{\xi,\zeta}(g)
&=&
\sum_{\beta\in\mathbb{N}^{n}}
\frac{\xi_{\beta}}{\beta!}
\left.
\partial_{t}^{\beta}
\left[
J_{F}(\zeta+t)
\sum_{\alpha\in\mathbb{N}^{n}}
\frac{g^{(\alpha)}(F(\zeta))}{\alpha!}
\Phi_{\zeta}(t)^{\alpha}
\right]
\right|_{t=0}
\\&=&
\sum_{\alpha\in\mathbb{N}^{n}}
\frac{g^{(\alpha)}(F(\zeta))}{\alpha!}
\sum_{\beta\in\mathbb{N}^{n}}
\frac{\xi_{\beta}}{\beta!}
\left.
\partial_{t}^{\beta}
\left[
J_{F}(\zeta+t)
\Phi_{\zeta}(t)^{\alpha}
\right]
\right|_{t=0}
\\&=&
(\xi^F_{\zeta}\cdot g)(F(\zeta)).
\end{eqnarray*}
This finishes the proof.
\end{proof}

\begin{rem}
In general, the transformed sequence $\xi_\zeta^F$ depends on the base
point $\zeta$. Indeed, let $F:\Omega_1\to\Omega_2$ be a biholomorphism
between planar domains and take
\[
\xi=(\delta_{nN})_{n\geq 0}
\]
for some $N\in\mathbb N$. Then, for $\zeta\in\Om_1$,
\[
\bigl(\xi_\zeta^F\bigr)_m
=
\frac{1}{N!}
\left.
\frac{d^N}{dt^N}
\left[
F'(\zeta+t)
\left(F(\zeta+t)-F(\zeta)\right)^m
\right]
\right|_{t=0}.
\]
Since $F(\zeta+t)-F(\zeta)$ vanishes to first order at $t=0$, it follows
that $\bigl(\xi_\zeta^F\bigr)_m=0$, $m>N$.
For the two extreme indices, we have
\[
\bigl(\xi_\zeta^F\bigr)_0
=
\frac{F^{(N+1)}(\zeta)}{N!}
\qquad
\bigl(\xi_\zeta^F\bigr)_N
=
\bigl(F'(\zeta)\bigr)^{N+1}.
\]
This shows the dependency on $\zeta\in\Om$ even though the original sequence is supported at the single index $N$.
\end{rem}

\begin{thm}
Let $F:\Omega_{1}\to\Omega_{2}$ be a biholomorphism between domains $\Om_1,\Om_2\subset \mathbb{C}^n$. 
The following transformation formulas hold.
\begin{enumerate}
\item The diagonal $\xi$-Bergman kernel satisfies
\begin{equation}
K_{\xi,\Omega_{1}}(z)
=
K_{\xi^{F}_{z},\Omega_{2}}(F(z))
\end{equation}
for every $z\in\Om_1$.

\item The off-diagonal $\xi$-Bergman kernel satisfies
\begin{equation}
K_{\xi,\Omega_{1}}(z,\zeta)
=
J_{F}(z)\,
K_{\xi^{F}_{\zeta},\Omega_{2}}
\bigl(F(z),F(\zeta)\bigr)
\end{equation}
for every $z,\zeta\in\Omega_{1}$.

\item The $\xi$-Bergman kernel $B_{\xi,\Omega_{1}}$ satisfies
\begin{equation}
B_{\xi,\Omega_{1}}(z,\zeta)
=
\left\langle
K_{\xi^{F}_{\zeta},\Omega_{2}}
\bigl(\cdot,F(\zeta)\bigr),
K_{\xi^{F}_{z},\Omega_{2}}
\bigl(\cdot,F(z)\bigr)
\right\rangle_{A^{2}(\Omega_{2})}
\end{equation}
for every $z,\zeta\in\Omega_{1}$. Equivalently,
\begin{equation}
B_{\xi,\Omega_{1}}(z,\zeta)
=
\left(
\xi^{F}_{z}\cdot
K_{\xi^{F}_{\zeta},\Omega_{2}}
\bigl(\cdot,F(\zeta)\bigr)
\right)(F(z)).
\end{equation}
\end{enumerate}
\end{thm}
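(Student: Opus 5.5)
The argument rests on the unitarity of $U_F$ and on the preceding Proposition, which gives $L_{\xi,\zeta}(g)=(\xi^F_\zeta\cdot g)(F(\zeta))$, combined with the Riesz characterization of Theorem \ref{rln}. The first step identifies the Riesz representative of $L_{\xi,\zeta}$ in two ways. For $g\in A^2(\Om_2)$, Theorem \ref{rln} on $\Om_1$ and unitarity give
\[
L_{\xi,\zeta}(g)=\langle U_F g, K_{\xi,\Om_1}(\cdot,\zeta)\rangle_{A^2(\Om_1)}=\langle g, U_F^{-1}K_{\xi,\Om_1}(\cdot,\zeta)\rangle_{A^2(\Om_2)} .
\]
On the other hand, since $\xi^F_\zeta\in\ell^1$, Theorem \ref{rln} on $\Om_2$ gives
\[
L_{\xi,\zeta}(g)=\langle g, K_{\xi^F_\zeta,\Om_2}(\cdot,F(\zeta))\rangle_{A^2(\Om_2)} .
\]
Uniqueness in the Riesz representation theorem then yields
\[
U_F^{-1}K_{\xi,\Om_1}(\cdot,\zeta)=K_{\xi^F_\zeta,\Om_2}(\cdot,F(\zeta)),
\qquad\text{that is,}\qquad
K_{\xi,\Om_1}(\cdot,\zeta)=U_F\bigl(K_{\xi^F_\zeta,\Om_2}(\cdot,F(\zeta))\bigr).
\]

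Evaluating this identity at $z$ and unwinding the definition $U_F(g)(z)=J_F(z)g(F(z))$ gives (2) directly:
\[
K_{\xi,\Om_1}(z,\zeta)=J_F(z)\,K_{\xi^F_\zeta,\Om_2}(F(z),F(\zeta)).
\]
For (3), I take the Gram inner product in Definition \ref{D: The xi-Bergman kernel B} (recall $R_\xi=K_\xi$) and transport it through the unitary $U_F^{-1}$:
\[
B_{\xi,\Om_1}(z,\zeta)=\langle K_{\xi,\Om_1}(\cdot,\zeta),K_{\xi,\Om_1}(\cdot,z)\rangle_{A^2(\Om_1)}
=\langle K_{\xi^F_\zeta,\Om_2}(\cdot,F(\zeta)),K_{\xi^F_z,\Om_2}(\cdot,F(z))\rangle_{A^2(\Om_2)} .
\]
The equivalent form follows by applying the reproducing property of $K_{\xi^F_z,\Om_2}(\cdot,F(z))$ from Theorem \ref{rln} on $\Om_2$ to the function $g=K_{\xi^F_\zeta,\Om_2}(\cdot,F(\zeta))$. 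Finally, (1) is the special case $\zeta=z$ of (3), using $B_\xi(z,z)=K_\xi(z)$ from Theorem \ref{rln} on both domains.

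The only delicate point is checking that the sequence $\xi^F_\zeta$ attached to $\Om_2$ is a legitimate element of $\ell^1$, so that Theorem \ref{rln} applies on $\Om_2$. This is exactly the content of the preceding Proposition, so no real obstacle remains. I must also be careful that the sequence depends on the base point, so the two arguments in (3) carry different sequences, $\xi^F_\zeta$ and $\xi^F_z$. Because of this, there is no single kernel on $\Om_2$ whose Gram inner product gives $B_{\xi,\Om_1}$; the formula is stated as a mixed inner product for precisely this reason.
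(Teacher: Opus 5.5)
Your proof is correct and follows the paper's argument for (2) and (3) essentially verbatim: you identify $U_F^{-1}K_{\xi,\Omega_1}(\cdot,\zeta)$ as the Riesz representative of $g\mapsto(\xi^F_\zeta\cdot g)(F(\zeta))$, then transport the Gram inner product through the unitary map. The only divergence is in (1). You obtain it as the diagonal case of (3), using $B_\xi(z,z)=K_\xi(z)$ on both domains, whereas the paper proves it directly from the supremum definition and the surjectivity of $U_F$; both routes are valid.
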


\begin{proof}
Since $F$ is a biholomorphism, by the chain rule for complex Jacobians, $U_F^{-1}=U_{F^{-1}}$. Thus, the unitary map $U_F$ is surjective. Therefore, every $h\in A^2(\Omega_1)$ can be written
uniquely as $h=U_F(g)$, where $g\in A^2(\Omega_2)$, and
$\|h\|_{A^2(\Omega_1)}=\|g\|_{A^2(\Omega_2)}$. Hence
\begin{eqnarray*}
K_{\xi,\Omega_1}(z)
&=&
\sup_{0\neq h\in A^2(\Omega_1)}
\frac{|(\xi\cdot h)(z)|^2}
     {\|h\|_{A^2(\Omega_1)}^2}  \\
&=&
\sup_{0\neq g\in A^2(\Omega_2)}
\frac{|(\xi\cdot U_F(g))(z)|^2}
     {\|U_F(g)\|_{A^2(\Omega_1)}^2} \\
&=&
\sup_{0\neq g\in A^2(\Omega_2)}
\frac{|(\xi_z^F\cdot g)(F(z))|^2}
     {\|g\|_{A^2(\Omega_2)}^2} 
     =
K_{\xi_z^F,\Omega_2}(F(z)).
\end{eqnarray*}
This proves $(1)$. Fix $\zeta\in\Omega_1$. For every $g\in A^2(\Omega_2)$, the reproducing
property of the off-diagonal $\xi$-Bergman kernel gives
\begin{eqnarray*}
\left\langle
g,U_F^{-1}K_{\xi,\Omega_1}(\cdot,\zeta)
\right\rangle_{A^2(\Omega_2)}
&=&
\left\langle
U_F(g),K_{\xi,\Omega_1}(\cdot,\zeta)
\right\rangle_{A^2(\Omega_1)} \\
&=&
(\xi\cdot U_F(g))(\zeta) \\
&=&
(\xi_\zeta^F\cdot g)(F(\zeta)) 
=
\left\langle
g,K_{\xi_\zeta^F,\Omega_2}(\cdot,F(\zeta))
\right\rangle_{A^2(\Omega_2)}.
\end{eqnarray*}
Since this holds for every $g\in A^2(\Omega_2)$, the uniqueness of the
Riesz representative implies
\begin{equation}\label{aid}
U_F^{-1}K_{\xi,\Omega_1}(\cdot,\zeta)
=
K_{\xi_\zeta^F,\Omega_2}(\cdot,F(\zeta)).
\end{equation}
Applying $U_F$ to \eqref{aid} and evaluating at
$z\in\Omega_1$, we obtain
\[
K_{\xi,\Omega_1}(z,\zeta)
=
U_F\left(
K_{\xi_\zeta^F,\Omega_2}(\cdot,F(\zeta))
\right)(z) 
=
J_F(z)\,
K_{\xi_\zeta^F,\Omega_2}(F(z),F(\zeta)).
\]
This proves $(2)$.
Finally, using the relation between $B_\xi$ and the off-diagonal
$\xi$-Bergman kernel, and \eqref{aid}, we get
\begin{eqnarray*}
B_{\xi,\Omega_1}(z,\zeta)
&=&
\left\langle
K_{\xi,\Omega_1}(\cdot,\zeta),
K_{\xi,\Omega_1}(\cdot,z)
\right\rangle_{A^2(\Omega_1)} \\
&=&
\left\langle
U_F^{-1}K_{\xi,\Omega_1}(\cdot,\zeta),
U_F^{-1}K_{\xi,\Omega_1}(\cdot,z)
\right\rangle_{A^2(\Omega_2)} \\
&=&
\left\langle
K_{\xi_\zeta^F,\Omega_2}(\cdot,F(\zeta)),
K_{\xi_z^F,\Omega_2}(\cdot,F(z))
\right\rangle_{A^2(\Omega_2)}.
\end{eqnarray*}
By the reproducing property of
$K_{\xi_z^F,\Omega_2}(\cdot,F(z))$, the last expression is 
$(\xi_z^F\cdot
K_{\xi_\zeta^F,\Omega_2}(\cdot,F(\zeta))
)(F(z)).$
Therefore,
\[
B_{\xi,\Omega_1}(z,\zeta)
=
\left(
\xi_z^F\cdot
K_{\xi_\zeta^F,\Omega_2}(\cdot,F(\zeta))
\right)(F(z)).
\]
This proves $(3)$.
\end{proof}

\begin{rem}
When $F:\Om_1\rightarrow\Om_2$ is an affine map given by $F(z)=Az+b$ for some $A\in GL(n,\mathbb C)$, the sequence $\xi^F_{\zeta}$ is independent of $\zeta\in\Om_1$. We shall, therefore, use the notation $\xi^A := \xi^F_{\zeta}$. In fact, for every $\alpha\in\mathbb{N}^n$, we have
\[
\left(\xi^A\right)_{\alpha}
=
(\det A)
\sum_{\beta}
\frac{\xi_\beta}{\beta!}
\partial_t^\beta\left.[(At)^\alpha]\right\rvert_{t=0}.
\]
\end{rem}


\section{Acknowledgment}

The first named author acknowledges the support from the DST INSPIRE FACULTY research grant DST/INSPIRE/04/2024/003868, and the Research Initiation Grant IITJ/R\&D/IGRC/2025-26/53 from IIT Jodhpur. The second named author was supported by the ANRF-National Postdoctoral Fellowship, project no. PDF/2025/001158. The third named author was supported by the institute seed grant IIT/SRIC/BS/RP551/2025-2026 IIT Bhubaneswar.

\end{document}